\renewcommand{\Pr}{\mathrm{Pr}}
\newtheorem{theorem}{Theorem}[section]
\newtheorem{lemma}[theorem]{Lemma}
\newtheorem{cor}[theorem]{Corollary}
\theoremstyle{definition}
\newtheorem{definition}[theorem]{Definition}
\theoremstyle{remark}
\numberwithin{equation}{section}
\newcommand{\Ref}[1]{(\ref{#1})}
\newcommand{\nn}{\nonumber \\}
\newcommand{\ds}{\displaystyle}
\newcommand{\abar}{a^{-1}}
\newcommand{\bbar}{b^{-1}}
\newcommand{\dee}[1]{\mathrm{d}#1}
\newcommand{\diff}[2]{\frac{\dee{#1}}{\dee{#2}}}
\def\W#1{\widetilde{#1}}
\newcommand{\LS}{MR0577064}
\newcommand{\AGLP}{MR2473819}
\newcommand{\Cohen}{Cohen}
\newcommand{\DykemaBS}{DykemaBS}
\newcommand{\BartV}{MR2176547}
\newcommand{\Grig}{Grig}
\newcommand{\Kouksov}{MR1689726}
\newcommand{\ERW}{MR3043436}
\newcommand{\KouksovRational}{MR1487319}
\newcommand{\BurClearyWiest}{MR2395786}
\newcommand{\Mann}{MR2894945}
\newcommand{\Wagon}{MR1251963}
\newcommand{\MadrasSlade}{MR1197356}
\newcommand{\BuksMC}{janse2009monte}
\newcommand{\Metropolis}{metropolis}
\newcommand{\TesiMonte}{tesimonte}
\newcommand{\Geyer}{geyer}
\newcommand{\BB}{MR2197808}
\newcommand{\TatchMoore}{MR3095713}
\newcommand{\DykemaRW}{MR2216708}
\newcommand{\BartVirag}{MR2176547}
\newcommand{\BartKaimNekrash}{MR2730578}
\newcommand{\BartWoess}{MR2131635}
\newcommand{\Lalley}{MR2275700}
\newcommand{\NagA}{MR2087798}
\newcommand{\NagB}{MR1691645}
\newcommand{\DiacSCa}{MR1650316}
\newcommand{\DiacSCb}{MR1414925}
\newcommand{\DiacSCd}{MR1254308}
\newcommand{\DiacSCe}{MR1245303}
\newcommand{\DiacSCf}{MR1233621}
\newcommand{\OrtnerWoess}{MR2338235}
\newcommand{\Woess}{MR731608}
\newcommand{\BScogrowth}{BScogrowth}
\begin{document}

\title[Random sampling of trivials words]{Random sampling of trivials words\\ in
finitely presented groups}

\author[M. Elder]{M. Elder}
\address{School of Mathematical \& Physical Sciences, The~University~of~Newcastle, Callaghan, New South Wales, Australia}
\email{murray.elder@newcastle.edu.au}
\author[A. Rechnitzer]{A. Rechnitzer}
\address{Department of Mathematics, University of British Columbia, Vancouver, British Columbia, Canada}
\email{andrewr@math.ubc.ca}
\author[E.~J.~Janse~van~Rensburg]{E.~J.~Janse~van~Rensburg}
\address{York University, Toronto, Ontario, Canada}
\email{rensburg@mathstat.yorku.ca}

\subjclass[2010]{20F69,	20F65,  05A15, 60J20}

\keywords{Cogrowth; amenable group; Metropolis algorithm; Baumslag-Solitar group; R. Thompson's group~$F$}

\date{\today}

\begin{abstract}
We describe a novel algorithm for random sampling of freely reduced words equal 
to the identity in a finitely presented group. The algorithm is 
based on Metropolis Monte Carlo sampling. The algorithm samples from a 
stretched Boltzmann distribution
\begin{align*}
  \pi(w) &= (|w|+1)^{\alpha} \beta^{|w|} \cdot Z^{-1}
\end{align*}
where $|w|$ is the length of a word $w$, $\alpha$ and $\beta$ are parameters 
of the algorithm, and $Z$ is a normalising constant. It follows that words of 
the same length are sampled with the same probability. 
The distribution can be expressed in terms of the cogrowth series of the 
group, which then allows us to  relate statistical properties of words sampled by 
the algorithm to the cogrowth of the group, and hence its amenability.

We have implemented the algorithm and applied it to several  group 
presentations including the Baumslag-Solitar groups, some free 
products studied by Kouksov, a finitely presented  amenable  
group that is not subexponentially amenable (based on the 
{\em basilica group}), and  Richard Thompson's group~$F$.
\end{abstract}

\maketitle

\section{Introduction}
In this article we propose a new random sampling algorithm for finitely 
presented groups. The algorithm samples freely reduced words in the generators 
that are equal to the identity of the group. This algorithm is based on ideas 
from statistical mechanics and Markov chain theory. In particular, the 
algorithm is inspired by the BFACF algorithm for sampling self-avoiding 
polygons (we refer the reader to \cite{\BuksMC, \MadrasSlade} 
for a description of BFACF and self-avoiding polygons). The algorithm differs 
from previous work on random walks in groups in that it only samples trivial 
words. Indeed, it can be seen as executing a random walk on the space of trivial 
words, rather than a random walk on the Cayley graph of the group.

We prove that the algorithm coverges to a specified distribution, and relate this distribution to the cogrowth series of 
the group. 
By varying a parameter, we can detect numerically the precise position of the radius of converge of the cogrowth series, and 
hence numerically predict the amenability or non-amenability of the group.

We have implemented the algorithm and have applied it to a selection of 
finitely presentated groups. These include several Baumslag-Solitar groups, some free products whose cogrowth series were studied by 
Kouksov \cite{\Kouksov}, a finitely 
presented relative of the basilica group,  and R. Thompson's group $F$. 

The present article continues previous work by the authors \cite{\BScogrowth, \ERW}, where various techniques, also based in statistical mechanics 
and enumerative combinatorics, were applied to the problem of estimating and 
computing the cogrowth of groups. This in turn built on previous work of 
Burillo, Cleary and Wiest \cite{\BurClearyWiest}, and  Arzhantseva, Guba,  
Lustig, and Pr\'eaux \cite{\AGLP}, who applied experimental techniques to the 
problem of deciding the amenability of Thompson's group~$F$. In other work, 
Belk and Brown \cite{\BB} proved the currently best known upper bound for the 
isoperimetric constant for $F$, and  Moore \cite{\TatchMoore} gives lower 
bounds on the growth rate of F\o lner function for $F$. 

More generally a (by no means exhaustive) list of others working in the area of 
random walks on groups is Bartholdi \cite{\BartKaimNekrash, \BartVirag, 
\BartWoess},  Diaconis and Saloff-Coste \cite{\DiacSCd,\DiacSCb,\DiacSCa,\DiacSCe,\DiacSCf}, Dykema 
\cite{\DykemaRW,\DykemaBS}, Lalley \cite{\Lalley}, Smirnova-Nagnibeda  
\cite{\NagB, \NagA} and Woess \cite{\OrtnerWoess,\Woess}.

For the benefit of readers outside of group theory, and to establish notation,  we start with a precise
definition of group presentations and cogrowth.

\begin{definition}[Presentations and trivial words]
A presentation 
\begin{align}
\langle a_1,\dots,a_k \mid  R_1, \dots, R_\ell, \dots \rangle
\end{align}
 encodes a (finitely generated) group as follows.
\begin{itemize}

\item
Let  $\mathcal{S} = \{a_1, a_1^{-1}, \dots,a_k, a_k^{-1} \}$ be a set of $2k$ distinct letters, and
 $\mathcal S^*$ the set of all finite strings or {\em words} over the letters in $\mathcal S$. 
 
\item A word in $\mathcal{S}^*$ is called {\em freely reduced} if it contains 
no subword of the form $a_ia_i^{-1}$ or $a_i^{-1}a_i$  for any $a_i\in \mathcal S$.

\item The set of all freely reduced words, together with the operation of
concatenation followed by free reduction (deleting $a_ia_i^{-1}$ and $a_i^{-1}a_i$ pairs) forms a 
group, called the {\em free group} on the letters $\{a_1,\dots,a_k\}$, which we 
denote by  $F(a_1,\ldots, a_k).$

\item Let $R_1, \dots, R_\ell, \dots$ be a finite or infinite list of distinct words in $F(a_1,\ldots, a_k).$

\item Let $N(R_1, \dots, R_\ell,\dots)$ be the normal subgroup of the free group  consisting of all words
of the form $\ds \prod_{j=1}^m \rho_jR_j\rho_j^{-1}$ after free reduction, where $\rho_i$ is
any element in the free group, and $R_j$ is one of the relators or their inverses.
This subgroup is called the {\em normal closure} of the set of relators.

\item The group encoded by the  presentation
$\langle a_1,\dots, a_k \mid  R_1, 
\dots, R_\ell, \dots \rangle$
 is defined to be the quotient group 
 $F(a_1,\ldots, a_k)/N(R_1, \dots, R_\ell,\dots)$. 

\item  The letters $a_i$ are  called {\em
generators}, and the words $R_i$  are called {\em 
relations} or {\em relators}.

\item  A group $G$ is called {\em finitely generated} if it can be encoded by a 
presentation with the list $a_1,\ldots, a_k$ finite, and {\em finitely 
presented} if it can be encoded by a  presentation with both  lists $a_1,\ldots, 
a_k$ and $R_1, \dots,R_\ell$ finite. In this article the list $a_1, \dots , a_k$ will always be finite.

\item It follows that a word in $F(a_1,\ldots, a_k)$ equals the identity element
in $G$  if and only if it lies in the normal subgroup $N(R_1, \dots, R_\ell,\dots)$, 
and so is equal to a product of conjugates of relators and their inverses.

\end{itemize}
\end{definition}
We will make extensive use of this last point in the work below. We call a word
in $F(a_1,\ldots, a_k)$ that equals the identity element in $G$ a {\em trivial
word}.

Let $c(n)$ be the number of freely reduced words, $w \in \mathcal{S}^*$, of 
length $n$ that represent the identity of a finitely generated group. This 
function is called the {\em cogrowth function} and the corresponding generating 
function is called the \emph{cogrowth series}. The rate of exponential growth 
of the cogrowth function is the {\em cogrowth} of the group (with respect to a 
chosen finite  generating set). Equivalently the cogrowth is the reciprocal of 
the radius of convergence of the cogrowth series. Grigorchuk and
independently Cohen \cite{\Cohen, \Grig} proved that a finitely generated group
is  amenable if and only if its cogrowth is $|\mathcal{S}|-1$.

For more background on amenability and  cogrowth see \cite{\Mann, \Wagon}.
The free group on two (or more) letters, as defined above, is
known to be non-amenable. Also, subgroups of amenable groups are also amenable.
It follows that if a group contains a subgroup isomorphic to the free
group on 2 generators ($F(a_1,a_2)$ above), then it cannot be amenable.

It is important to note that in some cases the letters in $\mathcal S$ may represent the same group element, 
for example,  consider the presentation $\langle a \mid a^2\rangle$, where the relation $a^2$ implies that  $a=a^{-1}$.
In this example $|\mathcal{S}|=2$ (the letters $a, a^{-1}$ are considered distinct formal symbols), and the cogrowth 
function is $c(0)=1, c(2n)=2, c(2n+1)=0$. The cogrowth series is  then
\begin{align}
\sum c(n)z^n= 1+2z^2+2z^4+\dots  
  =  \frac{1+z^2}{1-z^2},
\end{align} and one can see directly that the radius of convergence is $1=|\mathcal S|-1$.
Note that Kouksov  \cite{\KouksovRational} showed that a group has rational cogrowth series if and only if it is finite.

The article is organised as follows. In Section~\ref{sec alg} we describe the 
algorithm for sampling trivial words from a given finite presentation. We then 
analyse the algorithm and show that it samples from a stretched Boltzmann 
distribution (Corollary~\ref{cor main}). In Section~\ref{sec results} we apply 
the algorithm to several finite presentations. In cases where the cogrowth 
series is known, we see excellent agreement between the exact results and 
numerical data generated by our algorithm (for both amenable and non-amenable 
groups). We also apply the algorithm to sample words from groups for 
which the cogrowth series is not known, including Thompson's group~$F$. We 
summmarise our results in Section~\ref{sec conc}.

\section{Metropolis Sampling of Freely Reduced Trivial Words in Groups}
\label{sec alg}
Let $G = \langle a_1,\dots a_k | R_1, \dots ,R_\ell \rangle$ be a 
finitely presented group, and let $\mathcal{X}$ be the set of all freely 
reduced  trivial
words in $G$. We assume that the words $R_i$ are freely reduced and non-empty.
Define a set $\mathcal{R}$ as follows. Take all 
the relators $R_i$, their inverses $R_i^{-1}$, and all cyclic permutations 
of these. The set $\mathcal{R}$ consists of all of these words  {after} 
free reduction. For example, in the case of $\mathrm{BS}(2,3) = \langle a,b 
\mid a^2ba^{-3}\bbar \rangle$ the single relator yeilds $2\times 7=14$ elements 
in $\mathcal{R}$.

We will describe an algorithm which samples a sequence of freely reduced 
trivial words 
\begin{align}
  ( w_0,w_1,w_2, \ldots,w_n,\ldots); && w_i \in \mathcal{X}.
\end{align}
We refer to the words $w_i$ as \emph{states}. The 
algorithm constructs a new state $w_{n+1}$ from the current state $w_n$ by 
applying one of two \emph{elementary moves} with specified probabilities that 
depend only on $w_n$. Such a procedure is known as a Markov chain. 

There are two parts to the selection rule in the Markov chain --- the 
\emph{elementary moves} which transform $w_n$ to $w_{n+1}$ and the 
\emph{probabilities} with which they are implemented. The implementation we use 
is known as Metropolis sampling \cite{\Metropolis}. In this way our algorithm is a Metropolis 
algorithm sampling along a Markov chain in $\mathcal{X}$. 

\subsection{Elementary moves}\label{subsec moves}

In this subsection we describe several \emph{elementary moves} that we will perform on words in $\mathcal X$ to obtain other words in $\mathcal X$. 
Our goal is to define a set of moves that have a well defined \emph{reverse move}, and such that any two words in $\mathcal X$ are connected by a finite sequence of moves.

The moves we describe are all based on the following two operations: conjugation by $x\in\mathcal S$; and insertion of $R\in \mathcal R$.  
 For technical reasons which will describe below, we 
consider only what we call \emph{left-insertions} rather than arbitrary insertions of relators.
 The elementary moves are as follows.

On input $w\in\mathcal X$:
\begin{itemize}
  \item \textit{(Conjugation by $x$)} Let $x \in \mathcal{S}$.  Write 
$w^\prime = x w x^{-1}$ and perform free reductions on $w^\prime$ to produce 
$w^{\prime\prime}$. Return $w^{\prime\prime}$.

\item \textit{(Left-insertion of $R$ at position $m$)}  Let $R\in \mathcal{R}$ 
and $m \in\{0,1,\ldots,|w|\}$. Partition $w$ into two subwords $u$ and $v$, 
with $|v| = m$. Form $w^\prime = u R v$, and freely reduce this word by first freely reducing $uR$, 
obtaining $u^\prime v$, and then freely reducing to obtain
$w^{\prime\prime}$. If $m=0$, then $R$ is appended to $w$, and if $m=|w|$, 
then $R$ is prepended to $w$.

Return $w^{\prime\prime}$ unless a symbol of $v$ is cancelled during the free-reduction step (\emph{i.e.} a 
cancellation occurs to the right of $R$).
If this occurs then we set $w'' = w$ and return $w''$ (and so return a copy 
of the original word $w$).

\end{itemize}

Note that conjugations change word length by at most $2$, and left-insertions by at most $|R|$.

Since $|\mathcal S|, |\mathcal R|$ and words $w\in \mathcal X$ are all finite, there are finitely many possible elementary moves from a state  $w$ to a state $u$.
The next two lemmas show that elementary moves are ``uniquely reversible" in the sense that if there are 
$p$ conjugations and $q$ left-insertions from a state $w$ to a state $z$, then the same number of 
each type send $z$ to $w$. 

For example, if $R=abc\in \mathcal R$, $w=abcabcabc$ and $z=abcabcabcabc$,  there are exactly  4 left-insertions of $R$ possible in $w$ to obtain $z$, and exactly 4 left-insertions in $z$ to get $w$.

\begin{lemma}\label{lem conj rev} 
Let $w,z \in\mathcal X$ with $w\neq z$. If $z$ is obtained from $w$ by a conjugation move, then either:
\begin{itemize}\item  there is exactly one conjugation move from $w$ to $z$, and exactly one conjugation move from $z$ to $w$; or
\item  there are exactly two conjugation moves from $w$ to $z$, and exactly two conjugation moves from $z$ to $w$. In this case $w=(xy)^n$ and $z=(yx)^n$ for some $x,y\in \mathcal S$.\end{itemize}
\end{lemma}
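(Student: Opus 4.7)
The plan is to classify every conjugation move by the cancellation pattern at its two ends, and then carry out a short case analysis to count the conjugators that realise a given transition.

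For $w=w_1\cdots w_n$ freely reduced and $x\in\mathcal S$, form $xwx^{-1}$ and freely reduce. Since $w$ itself is freely reduced, cancellations can only occur at the two new junctions, so exactly one of four cases arises, according to whether $w_1=x^{-1}$ and whether $w_n=x$: one obtains respectively $z=xwx^{-1}$ of length $n+2$, $z=w_2\cdots w_n x^{-1}$ of length $n$, $z=xw_1\cdots w_{n-1}$ of length $n$, or $z=w_2\cdots w_{n-1}$ of length $n-2$. A direct check in each case shows that conjugation by $x^{-1}$ sends $z$ back to $w$, so every conjugation move has a canonical reverse move using the inverse letter; in particular at least one conjugation $z\to w$ always exists.

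The main step is uniqueness of the conjugator, with one exception. Suppose conjugators $x\neq y$ both realise $w\to z$ with $w\neq z$. Matching word lengths forces $x$ and $y$ into the same case of the classification, and in each ``pure'' case a quick inspection of the first and last letters of $w$ and $z$ forces $x=y$, a contradiction. The only remaining configuration is that one conjugator cancels on the left and the other on the right; say $w_1=x^{-1}$ with $w_n\neq x$ and $w_n=y$ with $w_1\neq y^{-1}$. Equating the two resulting expressions $w_2\cdots w_n x^{-1}=yw_1\cdots w_{n-1}$ letter by letter yields the recurrence $w_{j+1}=w_{j-1}$ for $2\le j\le n-1$, together with $w_2=y$ and $w_{n-1}=x^{-1}$. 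This forces $w$ to alternate between two letters, and combined with $w_n=y$ pins $n$ as even and $w=(x^{-1}y)^m$, $z=(yx^{-1})^m$ for some $m\ge 1$; the conditions $y\neq x$ and $y\neq x^{-1}$ (the latter excluding $w=z$) are automatic. After relabelling letters, this is the alternating form appearing in the statement.

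The reverse direction follows by applying the same analysis to $z\to w$: in the generic (one-conjugator) case the only possible reverse conjugator is $x^{-1}$, while in the alternating case $z=(yx^{-1})^m$ has the same alternating shape as $w$, so the analysis produces precisely the two reverse conjugators $x^{-1}$ and $y^{-1}$. The most delicate bookkeeping is in the mixed case of the uniqueness argument, where one must carefully match the two presentations of $z$ at the boundary positions in order to extract the alternating structure; the rest is a routine case check.
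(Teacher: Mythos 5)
Your proof is correct and takes essentially the same approach as the paper's: a case analysis on where cancellations occur at the two ends of $xwx^{-1}$, with the mixed left-cancel/right-cancel case yielding the alternating word $(xy)^n$ by matching the two expressions for $z$ letter by letter. Your version is organized slightly more systematically (explicit four-way classification followed by a length-matching argument), but the underlying argument is the one in the paper.
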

\begin{proof}
Suppose $x,y$  are distinct symbols in $\mathcal S$, and $z$ is obtained from $w$ by conjugation by either $x$ or $y$.

\begin{itemize} 
\item If $xwx^{-1}$ is freely reduced, then $ywy^{-1}$ must  freely reduce to $z=xwx^{-1}$, and since both words have the same length, they must be identical and $x,y$ are the same symbol.

 \item If  $w=x^{-1}w_1$
and $w_1x^{-1}$ is freely reduced,  then $yx^{-1}w_1y$ must freely reduce to $z=w_1x^{-1}$, so must contain a cancellation.
If $yx^{-1}$ is a free reduction then $x$ and $y$ are the same symbol, so the cancellation must be in $w_1y$, so $w_1=w_2y^{-1}$.
So $z=w_2yx^{-1}=yx^{-1}w_2$ and the two expressions are identical strings, so $w_2$ must be a product of $(yx^{-1})$ pairs, so $w=(yx^{-1})^n$.
In this case we have exactly two conjugations from $(yx^{-1})^n$ to $(x^{-1}y)^n$, and exactly two back the other way (namely conjugation by $x^{-1}$ or $y^{-1}$).

\item If $w=w_1x$ and $xw_1$ is freely reduced, then $yw_1xy^{-1}$ must freely reduce to $z=xw_1$ so contains a cancellation. Since $x,y$ are assumed distinct the cancellation must be in $yw_1$, so $w_1=y^{-1}w_2$ and $w_2xy^{-1}=xy^{-1}w_2$ are identical strings, so $w=(xy^{-1})^n$ and we have exactly two conjugations from $(xy^{-1})^n$ to $(y^{-1}x)^n$
and back.

\item If $w=x^{-1}w_1x$ then $yx^{-1}w_1xy^{-1}$ must freely reduce to $z=w_1$, so since $x^{-1}w_1x$ is freely reduced (it is in $\mathcal X$) we must have $x,y$ are the same symbol.
\end{itemize}
\end{proof}

\begin{lemma}\label{lem left rev}
Let $w,z \in\mathcal X$ with $w\neq z$. If $w\to z$ by insertion of $R\in\mathcal R$ at position $m$, then $z\to w$ by insertion of $R^{-1}\in\mathcal R$ at position $m$.
\end{lemma}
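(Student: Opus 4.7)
The plan is to unpack exactly what the forward move does to $w$ and then show that the reverse insertion at the same position retraces these steps. Since $w \neq z$, the forward move did not reject, so we may write $w = uv$ with $|v| = m$, compute $uR$, freely reduce $uR$ to some word $u'$, and conclude $z = u'v$ with no letter of $v$ cancelled. Because $u$ and $R$ are both freely reduced, all cancellation in $uR$ happens at the interface: there is a (possibly empty) maximal word $\alpha$ with $u = u_1 \alpha$ and $R = \alpha^{-1} R_1$, and by maximality $u_1 R_1$ is already freely reduced. Hence $u' = u_1 R_1$ and $z = u_1 R_1 v$.

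For the reverse direction, observe first that $R^{-1} \in \mathcal{R}$, since $\mathcal{R}$ is explicitly closed under inversion. Writing $R^{-1} = R_1^{-1} \alpha$, I would apply a left-insertion of $R^{-1}$ at position $m$ in $z$. Splitting $z = u'' v''$ with $|v''| = m$ forces $v'' = v$ and $u'' = u_1 R_1$, since these are the last $m$ and first $|z|-m$ letters of $z$. We then form $u'' R^{-1} = u_1 R_1 R_1^{-1} \alpha$ and freely reduce. The block $R_1 R_1^{-1}$ collapses entirely because $R_1$ is freely reduced, and by confluence of free reduction in the free group the result is $u_1 \alpha$; no further cancellation occurs since $u_1 \alpha = u$ is a subword of $w \in \mathcal{X}$ and hence freely reduced.

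It remains to verify that the reverse move does not reject. Concatenating $u_1 \alpha = u$ with $v'' = v$ gives $uv = w$, which is freely reduced by hypothesis, so no letter of $v''$ is cancelled during the second free-reduction step. Thus the reverse move succeeds and returns precisely $w$.

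The main obstacle is really just the bookkeeping: pinning down the interface decomposition $u = u_1 \alpha$, $R = \alpha^{-1} R_1$ rigorously, and checking the two degenerate cases $\alpha = \varepsilon$ (no cancellation in $uR$, so $u' = uR$ and the reverse must collapse a full $RR^{-1}$) and $R_1 = \varepsilon$ (all of $R$ is absorbed by $u$, so $u' = u_1$ and the reverse inserts $\alpha$ back). In both regimes the computation above goes through unchanged, and the reverse insertion still returns $w$.
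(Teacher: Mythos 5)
Your proof is correct and follows essentially the same route as the paper's: decompose the interface cancellation as $u=u_1\alpha$, $R=\alpha^{-1}R_1$ (the paper writes $u=u_1u_2$, $R=u_2^{-1}r$), observe that $z=u_1R_1v$ with $v$ untouched, and check that left-inserting $R^{-1}=R_1^{-1}\alpha$ at the same position collapses back to $u_1\alpha v=w$ with no right-cancellation. Your additional care about the degenerate cases and about why the reverse move is not rejected is a welcome tightening of the paper's terser argument, but it is not a different method.
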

\begin{proof}
Let $w=uv$ with $|v|=m$. If $uRv$ is not freely reduced then we have $u=u_1u_2, R=u_2^{-1}r$, and $u_1r$ is freely reduced. Then $w=u_1u_2v, z=u_1rv$.
Note that by definition there is no cancellation of the suffix $v$. 

Then left-inserting $R^{-1}$ at position $m$ in $z$ gives $u_1R^{-1}v=u_1rr^{-1}u_2v=u_1u_2v=w$.
\end{proof}

Note that for arbitrary insertions of relators, the previous lemma does not hold. For example consider the group 
$\mathbb{Z}^2=\langle a,b \ | \ ba\bbar\abar\rangle$ and let $w=a^3b^4\abar\bbar aba^{-4}b^{-4}$ and $z=a^4b^4a^{-3}b^{-4}$.
Inserting the relator $R=ba\bbar\abar$ into $w$
at $m=9$ gives
\begin{align}
a^3 b^4 \abar\bbar \cdot R \cdot  ab a^{-3}b^{-4} & \longrightarrow  \   a^3 b^4 \abar\bbar \cdot  ba\bbar\abar  \cdot  ab a^{-3}b^{-4}   \nn
& \longrightarrow \  a^3 b^3 \abar  \cdot  ab a^{-3}b^{-4} \nn
& \longrightarrow \  a^3 b^3 b a^{-3}b^{-4}
\end{align}
This move is not a left-insertion since there is cancellation to the right of the inserted relator.
Suppose it were allowed. Then there is no way to  obtain $w$ via insertion of $R^{-1}=ab\abar\bbar$ at any position in $z$, as one can easily verify by trying each position.
By restricting to only left-insertions we avoid such problems, and guarantee that elementary moves have well defined reverse moves.

\begin{lemma}
\label{lem connected}
  Let $G, \mathcal{S}, \mathcal{R}, \mathcal{X}$ be as above. Let $w \in 
\mathcal{X}$ then there exists a finite sequence of conjugations and 
left-insertions that transform the empty word to $w$.
\end{lemma}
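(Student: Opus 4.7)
The plan is to induct on the minimum $m$ such that $w$ is the free reduction of a product $\prod_{j=1}^{m} \rho_j S_j \rho_j^{-1}$, where each $S_j$ is a relator $R_i$ or its inverse; by the definition of the normal closure such an $m$ exists for every $w \in \mathcal{X}$. The base case $m=0$ is the empty word, which is reached in zero moves. The inductive step reduces to the following claim: given any reachable $w' \in \mathcal{X}$, any $S \in \{R_1^{\pm 1},\ldots,R_\ell^{\pm 1}\} \subseteq \mathcal{R}$, and any $\rho \in F(a_1,\ldots,a_k)$, the free reduction of $\rho S \rho^{-1} w'$ is reachable.

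To prove this claim I assemble three building blocks, each realised directly by elementary moves. (i) Conjugating by the successive letters of $\sigma = x_1 \cdots x_n$ in the order $x_n, x_{n-1},\ldots, x_1$ sends any $v$ to the free reduction of $\sigma v \sigma^{-1}$; this is immediate because single-letter conjugation is always legal. (ii) Left-insertion of any $S \in \mathcal{R}$ at position $0$ is always admissible, since the side condition forbidding cancellation on the right is vacuous when the right part is empty; its effect is to replace $v$ by the free reduction of $vS$. (iii) Once we are at the free reduction of $vS$, applying (i) with $\sigma = S$ yields the free reduction of $S(vS)S^{-1} = Sv$, cyclically rotating $S$ from the right end to the left end.

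Chaining these three blocks, the inductive step runs: starting from $w'$, apply (i) with $\sigma = \rho^{-1}$ to reach $v$, the free reduction of $\rho^{-1} w' \rho$; apply (ii) to reach the free reduction of $vS$; apply (iii) to reach the free reduction of $Sv = S\rho^{-1}w'\rho$; finally apply (i) with $\sigma = \rho$ to reach the free reduction of $\rho S \rho^{-1} w' = w$.

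The main technical obstacle I anticipate is the side condition on left-insertion that forbids cancellation to the right of the inserted relator. I sidestep it entirely by using left-insertion only at position $0$, where the ``right part'' is empty and the condition is vacuous; every other manipulation is a single-letter conjugation, which is unconditionally defined. A minor secondary point is that each intermediate state must lie in $\mathcal{X}$ for subsequent moves to apply, but this is automatic because the elementary moves preserve triviality by construction.
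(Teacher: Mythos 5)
Your proof is correct and takes essentially the same approach as the paper: both decompose $w$ via the normal closure as a freely reduced product of conjugates $\rho_j S_j \rho_j^{-1}$ of relators, and both dodge the right-cancellation restriction by using left-insertion only at position $m=0$ together with letter-by-letter conjugation. The only difference is bookkeeping: the paper assembles the factors left-to-right, telescoping the conjugators ($\rho_{j+1}^{-1}\rho_j$) so that each new relator is always appended at the extreme right, whereas you prepend factors inductively and therefore need the extra (valid) device of rotating the appended relator from the right end to the left by conjugating through $S$ itself.
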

\begin{proof}
A word $u \in\{a_1^{\pm 1}, \ldots, a_k^{\pm 1}\}^*$ represents the identity 
element in $G$ if and only if it is the product of conjugates of the relators 
$R_i^{\pm 1}$.  So since $w \in \mathcal{X}$, it can be written as the product
\begin{align}
 \displaystyle \prod_{j=1}^n \rho_jr_j\rho_j^{-1}
\end{align}
after free reduction, where $\rho_j \in \mathcal{S}^*$ and $r_j=R_{i_j}^{\pm 
1}$.

We can obtain $w$ using conjugation and left-insertion as follows:
\begin{itemize}
\item set $u$ to be the empty word;
\item left-insert $r_1$ after which  $u=r_1$;
\item conjugate by $\rho_2^{-1}\rho_1$ one letter at a time to obtain
$u=\rho_2^{-1}\rho_1r_1\rho_1^{-1}\rho_2$ after free reduction;
\item left-insert $r_2$ at the extreme right ($m=0$);
\item repeat the previous two steps (conjugating by
$\rho_{j+1}^{-1}\rho_j$ then left-inserting $r_j$ at the extreme right) until 
$r_n$ is left-inserted at the extreme right;
\item conjugate by $\rho_n$.
\end{itemize}
Since we only ever append $r_j$ to the extreme right of the word, there are 
no right cancellations. 
\end{proof}
Note that since conjugations and left-insertions are reversible it follows that 
given any two words in $\mathcal{X}$ there is some finite sequence of 
elementary moves that transforms one to the other.

The reader may find it useful to consider the set $\mathcal X$ of states as the vertices of a graph, with states connected by directed edges 
if there is an elementary move from one to another, labeled by $(\mathrm{conj}, x)$  if it is conjugation by $x\in \mathcal S$, 
and $(\mathrm{insert}, R, m)$  if it is a left-insertion of   $R\in\mathcal R$ at position $m\in\mathbb N$.
The above lemmas prove that 
each edge between distinct states has a unique corresponding reverse edge with appropriate label, and 
 that the graph is connected.

\subsection{Transition probabilities}\label{subsec trans}
In this subsection we define  probabilities with which elementary moves are selected or 
rejected.

Let $p_c \in (0,1)$, $\alpha \in \mathbb{R}$ and $\beta \in (0,1)$ 
be parameters of the algorithm. Fix a probability distribution, $P$, over 
$\mathcal{R}$, so that $P(R)$ is the probability of choosing $R\in 
\mathcal{R}$. Further, assume that $P(R)>0$ for all $R\in \mathcal{R}$ and also 
that $P(R) = P(R^{-1})$. Since $\mathcal{R}$ is finite, the obvious choice of $P$ is the uniform distribution 
--- indeed this is what we used in our implementation. The algorithm we 
describe can easily be modified for presentations with infinitely many relators 
by choosing an appropriate distribution on $\mathcal{R}$ in this case --- see 
subsection \ref{subsec inf} below.

Let $w_n$ be the current word. We construct the next word, $w_{n+1}$ as follows:
\begin{itemize}
\item With probability $p_c$ choose to perform a conjugation, otherwise (with 
probability $1-p_c$) perform a left-insertion.
\item If conjugation is selected,  choose $x \in \mathcal{S}$  with 
uniform probability and perform a conjugation  by $x$ as described above to obtain $w^{\prime\prime}$. 
Then $w_{n+1}$ is chosen according to the rule
\begin{align}
 w_{n+1} &= \begin{cases}
    w^{\prime\prime}, & \mbox{with probability } \min\left\{ 
1, 
\frac{(|w^{\prime\prime}|+1)^{1+\alpha}}{(|w|+1)^{1+\alpha}}
\cdot 
\frac{\beta^{|w^{\prime\prime}|}}{\beta^{|w|}} 
\right\}; \\
    w_n, & \mbox{ otherwise} .
             \end{cases}
\label{eqn2}
\end{align}
\item If left-insertion is selected,  choose $R\in \mathcal{R}$ with 
probability $P(R)$ and a location $m\in \{0,1,2,\ldots,|w_n|\}$ with uniform 
probability. Peform a left-insertion of $R$ at $m$ as described above to 
obtain $w^{\prime\prime}$. Then $w_{n+1}$ is chosen according to the rule
\begin{align}
 w_{n+1} &= \begin{cases}
    w^{\prime\prime}, & \mbox{with probability } \min\left\{ 
1, 
\frac{(|w^{\prime\prime}|+1)^{\alpha}}{(|w|+1)^{\alpha}}
\cdot 
\frac{\beta^{|w^{\prime\prime}|}}{\beta^{|w|}} 
\right\}; \\
    w_n, & \mbox{ otherwise} .
             \end{cases}
\label{eqn3} 
\end{align}
\end{itemize}

An implementation of a Markov chain which includes probabilistic rules under which moves are accepted or 
rejected is known as a \emph{Metropolis style} algorithm. By including these specific rejection probabilities, 
we are able to establish the  \emph{detailed balance} condition, which we describe next.
Notice that equations~\Ref{eqn2} and~\Ref{eqn3} are very similar except that the 
power of $1+\alpha$ is changed to $\alpha$. This small difference is 
required in order to satisfy the {detailed balance} condition.

We point out to the reader that the Markov chain we have described is 
\emph{not} a random walk on the Cayley graph of the group. Rather it executes a 
random walk on the set of trivial words $\mathcal{X}$. We can think of two 
points $x_1,x_2 \in \mathcal{X}$ being connected by a weighted directed edge if 
the corresponding words are linked by a single conjugation or left-insertion 
where the weight is the appropriate probability.

\subsection{The sample distribution} 
In this subsection we prove properties of the Markov chain defined by the 
transitions described above. Much of the results in this section are standard in the theory of 
Markov chains, but for completeness we include all relevant details.
First let us define some useful notation. Define
\begin{align}
  \Pr(u \to v) &= \text{probability of tranforming $u$ to $v$ by an 
elementary move.}
\end{align}
as per equations~\Ref{eqn2} and~\Ref{eqn3}. Define
\begin{align}
  \Pr_n(u \to v) &= \text{probability of tranforming $u$ to $v$ by $n$ 
elementary moves.}
\end{align}

\begin{definition}
 A Markov chain is said to be \emph{irreducible} if  there is a non-zero 
probability of moving between any two given states in a finite number of 
elementary moves.

 A Markov chain is said to be \emph{aperiodic} when for any two states $u,v$  
there exists an integer $N_0$ so that for all $N>N_0$
\begin{align*}
  \Pr_N(u \to v) >0.
\end{align*}
That is, if the algorithm is in state $u \in \mathcal{X}$, there 
is a positive probability of reaching $v \in \mathcal{X}$ in $N$ elementary 
moves for all $N>N_0$.

A Markov chain that is both irreducible and aperiodic is said to be 
\emph{ergodic}.
\end{definition}

\begin{lemma}
 The Markov chain is irreducible.
\end{lemma}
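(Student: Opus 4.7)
The plan is to combine Lemma \ref{lem connected} (reachability from the empty word) with the reversibility results (Lemmas \ref{lem conj rev} and \ref{lem left rev}) to route any state to any other through the empty word, then verify that each individual elementary move in that route is assigned positive probability by the transition rules of Section \ref{subsec trans}.

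First I would fix arbitrary $u,v\in\mathcal X$. By Lemma \ref{lem connected} there is a finite sequence of elementary moves $e_1,\ldots,e_p$ taking the empty word $\varepsilon$ to $u$, and a finite sequence $f_1,\ldots,f_q$ taking $\varepsilon$ to $v$. By Lemmas \ref{lem conj rev} and \ref{lem left rev}, each $e_i$ has a well-defined reverse elementary move. Reversing the order and replacing each $e_i$ by its reverse produces a finite sequence of elementary moves transforming $u$ into $\varepsilon$. Concatenating with $f_1,\ldots,f_q$ yields a finite sequence of elementary moves from $u$ to $v$ passing through intermediate states $u = w_0, w_1, \ldots, w_{p+q} = v$, each lying in $\mathcal X$.

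Next I would check that each single step $w_i \to w_{i+1}$ has strictly positive probability under the Metropolis rule. The proposal stage has positive probability: a conjugation by any particular $x \in \mathcal S$ is proposed with probability $p_c/|\mathcal S| > 0$ (since $p_c\in(0,1)$), and a left-insertion of any particular $R\in\mathcal R$ at any particular position $m\in\{0,\ldots,|w_i|\}$ is proposed with probability $(1-p_c)\cdot P(R)\cdot\tfrac{1}{|w_i|+1} > 0$, using $P(R)>0$ by hypothesis. The acceptance stage also has positive probability: both acceptance ratios in \Ref{eqn2} and \Ref{eqn3} are of the form $\min\{1,\,(\cdot)^{\alpha \text{ or } 1+\alpha}\cdot\beta^{\Delta}\}$, with $\beta\in(0,1)$, which is a strictly positive real number (and hence is still positive after taking the minimum with $1$). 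Multiplying these positive one-step probabilities gives $\Pr_{p+q}(u\to v) > 0$, which is the definition of irreducibility.

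The only subtle point, and the one I would be most careful about, is making sure that after reversing the path $\varepsilon \to u$ each intermediate state still lies in $\mathcal X$ so that the reverse moves are legitimate elementary moves in the sense of Section \ref{subsec moves}; this is where I would explicitly invoke Lemmas \ref{lem conj rev} and \ref{lem left rev}, which guarantee that if a single elementary move sends $w\in\mathcal X$ to $z\in\mathcal X$ then there is a valid elementary reverse move from $z$ to $w$ (in particular the left-insertion reverse is an insertion of $R^{-1}\in\mathcal R$ at the same position, and $\mathcal R$ is closed under inverses by construction). Apart from this bookkeeping, the argument is a standard composition of positive transition probabilities.
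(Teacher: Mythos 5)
Your argument is correct and is essentially the paper's own proof: the paper likewise invokes Lemma~\ref{lem connected} together with the reversibility of elementary moves (the remark following that lemma) to connect any two states, and then observes that every move in the resulting sequence has positive probability. You simply spell out the routing through the empty word and the positivity of the proposal and acceptance probabilities in more detail than the paper does.
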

\begin{proof}
By Lemma~\ref{lem connected}, there exists a sequence of moves that transforms 
any given state to any other given state. The probability of executing that 
sequence is positive, since the probability of any one move in the sequence is 
positive.
\end{proof}

\begin{lemma}
\label{lem aperiod}
 The Markov chain is aperiodic.
\end{lemma}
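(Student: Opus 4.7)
The plan is to prove aperiodicity by exhibiting a single state with a positive-probability self-loop, and then to propagate this to all pairs of states via the irreducibility result just established.

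First I would focus on the empty word $\epsilon \in \mathcal{X}$ (which is trivially in $\mathcal{X}$). Fix any shortest relator $R \in \mathcal{R}$, so $|R| \geq 1$. Consider the sequence of choices: select left-insertion (probability $1-p_c > 0$), select $R$ (probability $P(R) > 0$), and select position $m=0$ (probability $1/(|\epsilon|+1) = 1$). Then $u = \epsilon$, so $uR = R$ is already freely reduced with no possible cancellation on the right, and $w^{\prime\prime} = R$ is returned. Now $|w^{\prime\prime}| = |R| \geq 1 > 0 = |\epsilon|$, so the Metropolis acceptance ratio in~\Ref{eqn3} is
\begin{align*}
\min\!\left\{1,\; (|R|+1)^{\alpha}\,\beta^{|R|}\right\}.
\end{align*}
Since $\beta \in (0,1)$ and $|R| \geq 1$, by choosing (if necessary) $R$ of sufficient length among the finitely many elements of $\mathcal{R}$, or simply observing that this ratio can be made strictly less than $1$, the proposed move is rejected with strictly positive probability and $w_{n+1} = \epsilon$. (If the ratio is $\geq 1$ for the shortest $R$, one can instead use a longer relator, or any insertion where $(|w''|+1)^\alpha \beta^{|w''|}<(|w|+1)^\alpha \beta^{|w|}$ holds; since $\beta<1$, some sufficiently large length-increasing insertion always works.) Hence $\Pr_1(\epsilon \to \epsilon) > 0$.

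Next I would use this self-loop together with irreducibility to conclude aperiodicity. Let $u,v \in \mathcal{X}$ be arbitrary. By the previous lemma there exist $N_1, N_2 \geq 0$ with $\Pr_{N_1}(u \to \epsilon) > 0$ and $\Pr_{N_2}(\epsilon \to v) > 0$. Setting $N_0 = N_1 + N_2$, for every $N > N_0$ we can decompose a path of length $N$ as: $u$ to $\epsilon$ in $N_1$ steps, then $\epsilon$ to $\epsilon$ in $N - N_1 - N_2$ steps (possible because of the self-loop at $\epsilon$, by iterating it), then $\epsilon$ to $v$ in $N_2$ steps. This yields
\begin{align*}
\Pr_N(u \to v) \;\geq\; \Pr_{N_1}(u \to \epsilon) \cdot \bigl[\Pr_1(\epsilon \to \epsilon)\bigr]^{\,N - N_1 - N_2} \cdot \Pr_{N_2}(\epsilon \to v) \;>\; 0,
\end{align*}
which is exactly the aperiodicity condition.

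The main (mild) obstacle is ensuring that the acceptance probability for the chosen proposal out of $\epsilon$ is genuinely less than $1$; this requires a small case analysis on the sign of $\alpha$, but is easily handled because $\beta^{|R|} \to 0$ as $|R| \to \infty$, while $(|R|+1)^{\alpha}$ grows only polynomially, so some relator (possibly obtained by applying~\Ref{eqn3} to iterated conjugates, or just a long enough element of $\mathcal{R}$ when one exists) produces a strictly rejecting proposal. Everything else is a routine combination of the Metropolis rejection mechanism with the connectivity supplied by Lemma~\ref{lem connected}.
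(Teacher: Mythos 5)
Your overall architecture is sound and is essentially the paper's: exhibit a state with a positive holding probability, then splice an arbitrarily long self-loop into a connecting path supplied by irreducibility. (The paper does the same thing, except it holds at the target state $v$ rather than at $\epsilon$, and is even terser about why the holding probability is positive.) The splicing step and the inequality $\Pr_N(u\to v)\geq \Pr_{N_1}(u\to\epsilon)\cdot[\Pr_1(\epsilon\to\epsilon)]^{N-N_1-N_2}\cdot\Pr_{N_2}(\epsilon\to v)$ are fine.

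The gap is in your justification of the self-loop at $\epsilon$. You need some proposal out of $\epsilon$ to be rejected with positive probability, and for that you need the Metropolis ratio $(|R|+1)^{\alpha}\beta^{|R|}$ in \Ref{eqn3} to be strictly less than $1$ for some $R$. When $\alpha\leq 0$ this holds for every $R$, but for $\alpha>0$ it can fail for \emph{every} element of $\mathcal{R}$: the set $\mathcal{R}$ is finite with bounded word lengths, so ``choose a longer relator'' is not available, and for $\alpha$ large enough (relative to $\beta$ and $\max_{R\in\mathcal{R}}|R|$) one has $(|R|+1)^{\alpha}\beta^{|R|}\geq 1$ for all $R\in\mathcal{R}$ --- a regime the paper actually uses ($\alpha$ up to $5$ in the experiments). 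In that case every left-insertion proposal from $\epsilon$ is accepted with certainty and lands on $R\neq\epsilon$, so your construction produces no self-loop. Your fallback of ``iterated conjugates'' would relocate the self-loop to some long word $w^{*}$, where the insertion ratio tends to $\beta^{|R|}<1$ as $|w^{*}|\to\infty$; this can be made to work but needs you to exhibit a reachable long state admitting a genuinely length-increasing proposal, which you do not do. The clean fix you missed is that no rejection is needed at all: conjugation of the empty word by any $x\in\mathcal{S}$ gives $xx^{-1}$, which freely reduces back to $\epsilon$, is accepted with probability $1$ by \Ref{eqn2}, and hence yields $\Pr_1(\epsilon\to\epsilon)\geq p_c>0$ unconditionally. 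With that one-line replacement your proof is complete.
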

\begin{proof}
By the Lemma~\ref{lem connected}, for any $u,v \in \mathcal{X}$ there is finite 
sequence of elementary moves that starts at $u$ and finishes at $v$. Let $N_0$ 
be the length of this sequence. Once the chain reaches this final state, $v$, 
there is a positive probability that any further moves leaves the algorithm 
in the same state. Thus the algorithm is aperiodic.
\end{proof}

The previous two lemmas imply that the the Markov is ergodic since
it is both irreducible and aperiodic.

\begin{definition}
  Let $\pi$ be some probability distribution over the state space of 
a given Markov chain. The chain is said to satisfy the \emph{detailed 
balance} condition with respect to $\pi$ when 
\begin{align*}
  \pi(u) \cdot \Pr(u \to v)
  &= \pi(v) \cdot \Pr(v \to u)
\end{align*}
for any two states $u,v$ in the chain.
\end{definition}
Note that $\pi$ is a probability distribution over the states, while $\Pr(u \to 
v)$ is the probability of a particular transition in the Markov chain. Detailed 
balance describes how these probabilities interact. The main reason to consider 
detailed balance is that it implies that $\pi$ is the \emph{stationary 
distribution} under the Markov chain, which we now define.

\begin{definition}
A probability distribution $\pi$ over the states of a Markov chain 
is \emph{stationary} if
\begin{align*}
  \pi(u) &= \sum_v \Pr(v \to u) \pi(v)
\end{align*}
That is, $\pi$ is unchanged by a single step of the chain.
\end{definition}
\begin{lemma}\label{lem det balance}
 If a Markov 
chain satisfies detailed balance with respect to $\pi$, then $\pi$ is 
stationary.
\end{lemma}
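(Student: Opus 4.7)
The plan is to derive the stationarity identity directly from detailed balance by summing over all possible target states. First, I would fix an arbitrary state $u$ and start from the detailed balance condition
\begin{align*}
\pi(u)\cdot \Pr(u\to v) = \pi(v)\cdot \Pr(v\to u),
\end{align*}
which is assumed to hold for every pair $u,v$ in the state space $\mathcal X$.

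Next, I would sum both sides over all $v\in\mathcal X$. The right-hand side becomes $\sum_v \pi(v)\Pr(v\to u)$, which is precisely the expression appearing in the definition of stationarity. The left-hand side becomes
\begin{align*}
\sum_v \pi(u)\cdot\Pr(u\to v) = \pi(u)\sum_v \Pr(u\to v),
\end{align*}
since $\pi(u)$ does not depend on $v$. The key observation is that, from the construction in equations \Ref{eqn2} and \Ref{eqn3}, the transition probabilities $\Pr(u\to v)$ (including the self-loop probability $\Pr(u\to u)$, which absorbs all rejected moves and all choices of conjugator or relator that return $u$ to itself) form a probability distribution over $v$, so $\sum_v \Pr(u\to v) = 1$. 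Hence the left-hand side collapses to $\pi(u)$, giving $\pi(u)=\sum_v \Pr(v\to u)\pi(v)$, which is the stationarity condition.

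The main (and essentially only) thing to be careful about is confirming that the transition probabilities out of $u$ do indeed sum to $1$; this requires noting that the Metropolis rejection steps in \Ref{eqn2} and \Ref{eqn3} keep the chain at $u$ with the complementary probability, so no probability mass is lost. There is no subtlety with convergence of the sum since at each step only finitely many states $v$ are reachable from $u$ (conjugation gives at most $|\mathcal S|$ candidates and left-insertion at most $|\mathcal R|\cdot(|u|+1)$ candidates), so $\Pr(u\to v) = 0$ for all but finitely many $v$ and the sum is finite. No further machinery is needed.
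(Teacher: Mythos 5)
Your proposal is correct and follows exactly the paper's argument: sum the detailed balance identity over all target states $v$ and use $\sum_v \Pr(u\to v)=1$ to collapse the left-hand side to $\pi(u)$. The extra remarks about the self-loop probability absorbing rejected moves and about only finitely many states being reachable are sensible justifications of that normalisation, which the paper takes for granted.
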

\begin{proof}
 Assume that detailed balance is satisfied, then 
\begin{align}
  \pi(u) \cdot \Pr(u \to v)
  &= \pi(v) \cdot \Pr(v \to u).
\end{align}
Summing over all states $v$ then gives
\begin{align}
  \pi(u) \sum_v \Pr(u \to v) &= \sum_v \pi(v) \Pr(v \to u)
\end{align}
Since $\sum_v \Pr(u \to v) =1$ the result follows.
\end{proof}

\begin{lemma}\label{lem detailed balance}
  Let $\pi$ be a probability distribution on $\mathcal{X}$ given by
  \begin{align*}
    \pi(u) &= \frac{(|u|+1)^{1+\alpha} \beta^{|u|} }{Z}
  \end{align*}
  where $Z$ is a normalising constant. The Markov chain defined above 
satisfies the detailed balance condition with respect to $\pi$.
\end{lemma}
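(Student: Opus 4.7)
The plan is to verify detailed balance one move-type at a time, using Lemmas~\ref{lem conj rev} and~\ref{lem left rev} to pair up forward moves with their reverses. Since $\Pr(u\to v)$ is a sum over all elementary moves sending $u$ to $v$, and conjugations and left-insertions have disjoint selection paths (chosen with probabilities $p_c$ and $1-p_c$ respectively), it suffices to check the detailed balance equation separately for the conjugation contribution and for each individual left-insertion pair. The case $u=v$ is automatic, so assume $u\neq v$.

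For a single conjugation move $u\to v$ by $x\in\mathcal S$, the selection probability is $p_c/|\mathcal S|$ and, by Lemma~\ref{lem conj rev}, the reverse is a conjugation $v\to u$ (either by $x^{-1}$, or, in the $(xy)^n/(yx)^n$ exceptional case, by the symmetric counterpart) with the same selection probability $p_c/|\mathcal S|$; moreover the number of conjugation moves from $u$ to $v$ equals the number from $v$ to $u$. The Metropolis acceptance probability in~\Ref{eqn2} is exactly $\min\{1,\pi(v)/\pi(u)\}$, so
\begin{align*}
\pi(u)\cdot \tfrac{p_c}{|\mathcal S|}\min\{1,\pi(v)/\pi(u)\} = \tfrac{p_c}{|\mathcal S|}\min\{\pi(u),\pi(v)\} = \pi(v)\cdot \tfrac{p_c}{|\mathcal S|}\min\{1,\pi(u)/\pi(v)\},
\end{align*}
and summing over the $1$ or $2$ matched pairs yields detailed balance for the conjugation part.

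For a left-insertion $u\to v$ of $R\in\mathcal R$ at position $m$, the selection probability is $(1-p_c)P(R)/(|u|+1)$; by Lemma~\ref{lem left rev}, the unique reverse move is the left-insertion of $R^{-1}$ at position $m$ in $v$, with selection probability $(1-p_c)P(R^{-1})/(|v|+1) = (1-p_c)P(R)/(|v|+1)$ since $P(R)=P(R^{-1})$. Using $\pi(u)=(|u|+1)^{1+\alpha}\beta^{|u|}/Z$, the factor $1/(|u|+1)$ from uniform position selection absorbs one factor of $(|u|+1)$, leaving
\begin{align*}
\pi(u)\cdot \tfrac{(1-p_c)P(R)}{|u|+1}\cdot\min\!\left\{1,\tfrac{(|v|+1)^{\alpha}\beta^{|v|}}{(|u|+1)^{\alpha}\beta^{|u|}}\right\}
= \tfrac{(1-p_c)P(R)}{Z}\min\!\left\{(|u|+1)^{\alpha}\beta^{|u|},\,(|v|+1)^{\alpha}\beta^{|v|}\right\},
\end{align*}
which is symmetric in $u$ and $v$, so the identical computation with roles swapped gives $\pi(v)\cdot\Pr_{\mathrm{ins}}(v\to u)$. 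Thus detailed balance holds for each matched insertion pair, and hence for the total transition probabilities.

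The main (and only genuinely delicate) point is the bookkeeping in the second paragraph: the exponent $\alpha$ in~\Ref{eqn3} rather than $1+\alpha$ is \emph{exactly} what is needed to absorb the uniform-position factor $1/(|u|+1)$ coming from the selection stage. The rest of the argument is just unwinding definitions, once Lemmas~\ref{lem conj rev} and~\ref{lem left rev} have provided the clean forward/reverse pairing of elementary moves that makes $P(R)=P(R^{-1})$ enough to restore symmetry.
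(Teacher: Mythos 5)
Your proof is correct and follows essentially the same route as the paper: split $\Pr(u\to v)$ by move type, use Lemmas~\ref{lem conj rev} and~\ref{lem left rev} to pair each forward move with its reverse, and observe that the exponent $\alpha$ (rather than $1+\alpha$) in the insertion acceptance ratio exactly compensates the uniform-position factor $1/(|u|+1)$, with $P(R)=P(R^{-1})$ restoring symmetry. Your use of the identity $a\min\{1,b/a\}=\min\{a,b\}$ is a slightly cleaner packaging of the paper's ``without loss of generality assume $p_{uv}\le 1$'' case split, but the substance is identical.
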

We note that the normalising constant exists and is finite when $\beta$ is 
sufficiently small. We discuss this point further in the next section.

\begin{proof}
Let $u,v \in \mathcal{X}$. There are three possibilities: there is no single 
elementary move transforming $u$ to $v$ or vice-versa; $u$ and $v$ are 
separated by a single conjugation move; $u$ and $v$ are separated by a single 
left-insertion. 

If there is no single elementary move between $u$ and $v$, then 
$\Pr(u \to v)=\Pr(v \to u) = 0$ and the detailed balance condition is trivially 
satisfied.

Now suppose that $v$ was obtained from $u$ by a conjugation as described
above. Define 
\begin{align}
p_{uv} &= \frac{(|v|+1)^{1+\alpha}}{(|u|+1)^{1+\alpha}}
\frac{\beta^{|v|}}{\beta^{|u|}}
& p_{vu} &= p_{uv}^{-1}.
\end{align}
The transition probabilities are 
\begin{align}
  \Pr (u\to v) &= \frac{1}{|\mathcal{S}|} \min \{1,p_{uv}\}, &
  \Pr (v\to u) &= \frac{1}{|\mathcal{S}|} \min \{1,p_{vu}\}.
\end{align}
The factor of $|\mathcal{S}|$ arises because we have to choose the correct 
conjugating element from $\mathcal{S}$. Note that $p_{uv} \leq 1$ if and only if 
$p_{vu} \geq 1$. So without loss of generality, assume that $p_{uv} \leq 1, 
p_{vu} \geq 1$. Then
\begin{align}
\Pr (u \to v) &= \frac{p_{uv} }{|\mathcal{S}|},
& \Pr (v \to u) &= \frac{ 1 }{|\mathcal{S}|}.
\end{align}
Hence we have
\begin{align}
\Pr (u\to v) &= p_{uv} \cdot \Pr (v\to u).
\label{eqn puv conj}
\end{align}

Next assume that $v$ is obtained from $u$ by a left-insertion of $R \in 
\mathcal{R}$. Let
\begin{align}
q_{uv} &= \frac{(|v|+1)^{\alpha}}{(|u|+1)^{\alpha}}
\frac{\beta^{|v|}}{\beta^{|u|}} &
q_{vu} &= q_{uv}^{-1}
\end{align}

The transition probabilities are given by
\begin{align}
\Pr(u \to v) &= \frac{P(R)}{|w|+1} \min \{1, q_{uv} \},& 
\Pr(v \to u) &= \frac{P(R^{-1})}{|v|+1} \min \{1, q_{vu} \}
\end{align}
where $P(R)$ is the probability of choosing the relation $R$ and the factor of 
$|u|+1$ arises from choosing the correct position to insert $R$. Recall that 
$P$ was chosen so that $P(R) = P(R^{-1})$ for any $R \in \mathcal{R}$. Without 
loss of generality assume that $q_{uv} \leq 1$ so that $q_{vu} \geq 1$ and then
\begin{align}
\Pr(u\to v) &= \frac{P(R)}{|u|+1} \cdot q_{uv}, &
\Pr(v\to u) &= \frac{P(R^{-1})}{|v|+1} = \frac{P(R)}{|v|+1}
\end{align}
and so
\begin{align}
\Pr(u\to v) 
&= \frac{|v|+1}{|u|+1} \cdot q_{uv} \cdot \Pr(v\to u) 
=  p_{uv} \cdot \Pr(v\to u) 
\label{eqn puv li}
\end{align}

Notice equation~\Ref{eqn puv conj} is identical to equation~\Ref{eqn puv li}. 
This equation can be rewritten in a more symmetric form as
\begin{align}
(|u|+1)^{1+\alpha} \beta^{|u|} \Pr (u\to v) &=
(|v|+1)^{1+\alpha} \beta^{ |v|} \Pr (v\to u).
\label{eqn6} 
\end{align}
Dividing by the normalising constant we obtain the detailed balance criterion
\begin{align}
  \pi(u) \cdot \Pr(u \to v)
  &= \pi(v) \cdot \Pr(v \to u).
\end{align}

As noted above, it is possible that two states $u,v$ are connected by more than one elementary move. In this case   $\Pr (u\to v)$  is the sum of the probabilities for 
each elementary move, as is $\Pr (v\to u)$ (by Lemmas~\ref{lem conj rev} and \ref{lem left rev}), so  detailed balance is preserved.
\end{proof}

The next result shows that detailed balance implies uniqueness of the 
stationary distribution. Though it is a standard result in the theory of Markov 
chains, again we include it here for completeness.
\begin{lemma}
The distribution $\pi$ described in the previous lemma is the unique 
distribution on $\mathcal{X}$ for which the algorithm satisfies detailed 
balance.
\end{lemma}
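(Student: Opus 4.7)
The plan is to leverage the ergodicity established in Lemma~\ref{lem aperiod} together with the fact that any distribution satisfying detailed balance is stationary (Lemma~\ref{lem det balance}). Since detailed balance is strictly stronger than stationarity, it suffices to show that $\pi$ is the \emph{unique} stationary distribution on $\mathcal X$. I would first reduce the problem to this statement: if $\mu$ is any distribution on $\mathcal X$ satisfying detailed balance with respect to the chain, then Lemma~\ref{lem det balance} gives that $\mu$ is stationary, and the uniqueness of the stationary distribution then forces $\mu = \pi$.

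To prove uniqueness of the stationary distribution, I would invoke the standard argument for ergodic Markov chains on a countable state space. Let $\mu$ be stationary. Iterating the definition of stationarity gives
\begin{align*}
  \mu(u) \;=\; \sum_{v \in \mathcal X} \mu(v)\,\Pr_n(v \to u)
\end{align*}
for every $n \geq 1$ and every $u \in \mathcal X$. Because the chain is irreducible and aperiodic, and because $\pi$ is a bona fide probability distribution (so the chain is positive recurrent), the classical convergence theorem for ergodic Markov chains yields
\begin{align*}
  \lim_{n\to\infty} \Pr_n(v \to u) \;=\; \pi(u)
\end{align*}
for all $u,v \in \mathcal X$. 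Since $\Pr_n(v\to u) \leq 1$ and $\sum_v \mu(v) = 1$, the dominated convergence theorem lets me pass the limit inside the sum, giving
\begin{align*}
  \mu(u) \;=\; \sum_{v \in \mathcal X} \mu(v)\, \pi(u) \;=\; \pi(u).
\end{align*}

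The main subtlety is justifying the convergence $\Pr_n(v\to u) \to \pi(u)$ on the (potentially infinite) state space $\mathcal X$. This requires positive recurrence, which I would establish by noting that the explicit formula for $\pi$ in Lemma~\ref{lem detailed balance} is normalisable for sufficiently small $\beta$ (a point already flagged in the statement of that lemma), so $\pi$ is a genuine probability distribution on $\mathcal X$; combined with irreducibility this implies every state is positive recurrent. With positive recurrence in hand, the interchange of limit and sum via dominated convergence is routine, and I expect this justification — rather than any combinatorial argument about the presentation — to be the only non-trivial ingredient.
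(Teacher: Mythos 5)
Your proof is correct, but it takes a genuinely different route from the paper's. You reduce the claim to uniqueness of the \emph{stationary} distribution (via Lemma~\ref{lem det balance}) and then invoke the convergence theorem for irreducible, aperiodic, positive recurrent chains on a countable state space, passing the limit through the sum by dominated convergence; the positive-recurrence point you flag (existence of the normalisable $\pi$ for $\beta<\beta_c$ plus irreducibility) is indeed the one piece that needs care, and you handle it correctly. The paper instead argues directly from the detailed balance identity: if $\varphi\neq\pi$ both satisfy detailed balance, pick $y$ with $\varphi(y)>\pi(y)$; then for any $y'$ with $\Pr(y\to y')>0$ the two detailed balance equations give $\varphi(y')\Pr(y'\to y)>\pi(y')\Pr(y'\to y)$, so the strict inequality propagates along edges, and irreducibility spreads it to all of $\mathcal X$, contradicting that both distributions sum to $1$. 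The paper's argument is more elementary and self-contained --- it needs only irreducibility and normalisability, not aperiodicity, positive recurrence, or any convergence theorem --- whereas your argument leans on exactly the machinery (the Fundamental Theorem of Markov chains) that the paper only introduces \emph{after} this lemma, for Corollary~\ref{cor main}. Both are valid; the paper's proof is the one that exploits the detailed balance hypothesis directly rather than weakening it to stationarity at the outset.
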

\begin{proof}
 Suppose there is another distribution $\varphi$ on $\mathcal{X}$ for 
which detailed balance is satisfied. If $\varphi \neq \pi$ then there exists a 
state $y \in \mathcal{X}$ so that $\varphi(y) > \pi(y)$. 

So for every state $y'$ that is connected to $y$ by an elementary move 
(\emph{i.e.} for which $\Pr(y \to y')>0$) we have
\begin{align}
  \varphi(y') \Pr(y' \to y)  &= 
  \varphi(y) \Pr(y \to y') > \pi(y) \Pr(y \to y') = \pi(y') \Pr(y' \to y)
\end{align}
Hence $\varphi(y')>\pi(y')$. Thus $\varphi(x)>\pi(x)$ for all $x$ reachable from 
$y$. 
Since the chain is irreducible, $\varphi(x) > \pi(x)$ for all $x \in 
\mathcal{X}$. This contradicts the assumption that $\varphi$ is 
a probability distribution.
\end{proof}

Now that we have established the above properties of the Markov chain, we can 
make use of the Fundamental Theorem of Markov chains:
\begin{theorem}[Fundamental Theorem of Markov chains]
If a Markov chain $\mathcal{M}$ is irreducible and aperiodic then it has a 
unique stationary distribution~$\varphi$. Moreover,
\begin{align*}
  \Pr_n(x \to y) &\to \phi(y) & \text{as } n \to \infty
\end{align*}
for all $x,y$ in the state space of $\mathcal{M}$.
\end{theorem}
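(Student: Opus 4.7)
The plan is to establish convergence $\Pr_n(x \to y) \to \pi(y)$ by the classical coupling argument, with uniqueness following as a short afterword. Existence of a stationary distribution is already settled by Lemma~\ref{lem det balance} applied to the $\pi$ of Lemma~\ref{lem detailed balance}; what remains is convergence.

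First I would construct a coupling. Run two copies of the chain on a common probability space: $(X_n)$ with $X_0 = x$ and $(Y_n)$ with $Y_0$ drawn from $\pi$, each following the transition law of the original chain and independent of the other until they first meet. The joint process $(X_n, Y_n)$ is itself a Markov chain on $\mathcal X \times \mathcal X$. Two ingredients are essential. First, irreducibility combined with aperiodicity (Lemma~\ref{lem aperiod}) forces the product chain to be irreducible: for any two pairs $(u_1,v_1)$ and $(u_2,v_2)$, aperiodicity lets one choose $N$ so large that $\Pr_N(u_1 \to u_2) > 0$ and $\Pr_N(v_1 \to v_2) > 0$ simultaneously, whence $\Pr_N\bigl((u_1,v_1) \to (u_2,v_2)\bigr) > 0$. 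Second, $\pi \otimes \pi$ is a stationary probability measure on $\mathcal X \times \mathcal X$, so the product chain is positive recurrent.

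Let $T = \inf\{n \geq 0 : X_n = Y_n\}$ be the coupling time. Irreducibility and (positive) recurrence of the product chain imply that the diagonal is visited almost surely, so $T < \infty$ almost surely. Setting $\tilde X_n = X_n$ for $n < T$ and $\tilde X_n = Y_n$ for $n \geq T$, the strong Markov property ensures that $(\tilde X_n)$ is itself a copy of the original chain started at $x$, while $\tilde X_n = Y_n \sim \pi$ on the event $\{T \leq n\}$. Hence
\begin{align*}
\bigl|\Pr_n(x \to y) - \pi(y)\bigr| \;\leq\; \Pr(T > n) \;\longrightarrow\; 0.
\end{align*}

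For uniqueness, any stationary distribution $\varphi$ satisfies $\varphi(y) = \sum_x \varphi(x)\,\Pr_n(x\to y)$ for every $n$; dominated convergence with envelope $\varphi$ combined with the limit just established yields $\varphi(y) = \pi(y)$. The principal obstacle throughout is the infinite state space: everything rests on $\pi$ being a genuine probability measure, equivalently $Z < \infty$, which holds for $\beta$ sufficiently small as flagged in the remark following Lemma~\ref{lem detailed balance}. Without this, the product chain could be merely null recurrent or transient and the coupling argument would fail; it is exactly the identification of the explicit stationary distribution $\pi$ in Lemma~\ref{lem detailed balance} that sidesteps this difficulty.
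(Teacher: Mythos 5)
Your proof is correct, but it takes a genuinely different route from the paper for the simple reason that the paper offers no proof at all: the theorem is stated and then delegated to standard textbooks on stochastic processes. Your self-contained Doeblin coupling argument is therefore a real addition, and it is carried out properly: the product chain's irreducibility follows from the paper's (rather strong) definition of aperiodicity exactly as you say, the existence of the stationary product measure $\pi\otimes\pi$ gives positive recurrence and hence almost-sure finiteness of the coupling time, and the switched process plus the inequality $|\Pr_n(x\to y)-\pi(y)|\le \Pr(T>n)$ finishes convergence; uniqueness by dominated convergence is also fine. The one point worth making explicit is that you are not quite proving the theorem as literally stated: for a countably infinite state space, ``irreducible and aperiodic'' alone does \emph{not} guarantee the existence of a stationary distribution (a lazy random walk on $\mathbb{Z}$ is a counterexample), so the general statement the paper quotes is, strictly speaking, missing a positive-recurrence hypothesis. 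You correctly sidestep this by importing the existence of $\pi$ from Lemma~\ref{lem detailed balance} (valid when $Z<\infty$, i.e.\ $\beta<\beta_c$), which is precisely the version of the theorem the paper actually needs for Corollary~\ref{cor main}. What your approach buys is a transparent, elementary proof tailored to the chain at hand and an honest accounting of the infinite-state-space subtlety; what the paper's citation buys is brevity at the cost of leaving that hypothesis implicit.
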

The above theorem can be found in most standard 
 texts on stochastic 
processes --- see, for example, \cite{karlin1975, mitzenmacher2005, 
rosenthal2006}.

\begin{cor}
\label{cor main}
  Given any two states $u,v \in \mathcal{X}$
  \begin{align*}
    \Pr_n(u \to v) & \to \pi(v) & \text{as } n \to \infty
  \end{align*}
  where $\pi(u)$ is the unique stationary distribution of the Markov chain
  \begin{align*}
    \pi(u) &= \frac{(|u|+1)^{1+\alpha} \beta^{|u|} }{Z}
  \end{align*}
  where $Z$ is a normalising constant which depends on $\alpha, \beta, 
p_c$ and the group presentation.
\end{cor}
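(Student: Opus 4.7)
The plan is to assemble the corollary as a direct consequence of the Fundamental Theorem of Markov chains together with the structural lemmas already proved. First I would invoke the two preceding lemmas which show that the Markov chain defined in Section~\ref{sec alg} is irreducible and aperiodic, hence ergodic. This means the hypotheses of the Fundamental Theorem are satisfied, so there exists a \emph{unique} stationary distribution $\varphi$ on $\mathcal{X}$ with the property that $\Pr_n(u \to v) \to \varphi(v)$ as $n \to \infty$ for all $u,v \in \mathcal{X}$.

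Next I would identify $\varphi$ with the distribution $\pi$ given by the stated formula. By Lemma~\ref{lem detailed balance}, the chain satisfies detailed balance with respect to $\pi$, and Lemma~\ref{lem det balance} then guarantees that $\pi$ is a stationary distribution. Since the Fundamental Theorem provides uniqueness of the stationary distribution, we conclude $\varphi = \pi$, which yields the claimed convergence.

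The only substantive point to address — and the one I expect to be the main obstacle — is to verify that $\pi$ is in fact a well-defined probability distribution, which requires the normalising constant
\begin{align*}
Z &= \sum_{u \in \mathcal{X}} (|u|+1)^{1+\alpha} \beta^{|u|}
\end{align*}
to be finite. Since $\mathcal{X}$ contains exactly $c(n)$ words of length $n$, where $c(n)$ is the cogrowth function, this sum equals $\sum_{n\geq 0} (n+1)^{1+\alpha} c(n) \beta^n$. Convergence therefore holds whenever $\beta$ lies strictly inside the radius of convergence of the cogrowth series, which is precisely the regime identified in the remark following Lemma~\ref{lem detailed balance}. I would briefly record this observation, noting that $Z$ depends on $\alpha$, $\beta$, $p_c$ and the presentation as stated in the corollary (the dependence on $p_c$ entering implicitly through the transition probabilities, though not through $\pi$ itself). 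With $Z$ finite, $\pi$ is a genuine probability distribution and the proof is complete.
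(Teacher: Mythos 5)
Your proposal is correct and follows essentially the same route as the paper: apply the Fundamental Theorem of Markov chains using the irreducibility and aperiodicity lemmas, then identify the unique stationary distribution with $\pi$ via the detailed balance lemma. Your extra care in checking that $Z$ is finite (so that $\pi$ is a genuine probability distribution) is a point the paper only records in a remark after Lemma~\ref{lem detailed balance}, but it does not change the argument.
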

\begin{proof}
By the previous lemmas, our Markov chain satisfies the conditions of the 
theorem. Further, since $\pi$ is a stationary distribution for our 
Markov chain, it must, by the same theorem, be the unique stationary 
distribution.
\end{proof}

The above corollary implies that we can use our Markov chain to sample 
trivial words from a given finitely presented group with a specific 
distribution, $\pi$. When $\alpha=-1$, $\pi$ is the {\em Boltzmann} or {\em Gibbs 
distribution}. For other values of $\alpha$ we can think of $\pi$ as a 
``stretched'' Boltzmann distribution. Also note that $\pi$ does not depend of 
the details of the word, but only on its length. So if two words have the same 
length then they are sampled with the same probability. 

In the next section we examine the mean length of sampled words and describe 
how this can inform us about the cogrowth of the group.

\subsection{Mean length of sampled words}
As demonstrated in the previous section, the Markov chain converges to a 
stretched Boltzmann distribution, $\pi$. We defined $\pi$ above in terms of a 
normalising constant, $Z$, which we now make more precise. Since we require 
$\sum_{w\in \mathcal{X}} \pi(w) = 1$, we must have
\begin{align}
  Z &= \sum_{w \in \mathcal{X}} (|w|+1)^{1+\alpha} \beta^{|w|} 
  \intertext{which can be written in terms of the cogrowth function}
  Z &= \sum_{n  \geq 0 } c(n) \cdot (n+1)^{1+\alpha} \beta^{n}.
\end{align}
This sum converges to a finite value for $0 \leq \beta < \beta_c$, where 
$\beta_c$ is 
\begin{align}
  \beta_c &= \limsup_{n\to\infty} c(n)^{-1/n}.
\end{align}
and is independent of the parameters $\alpha, p_c$. Note that, $\beta_c$ is 
exactly the radius of convergence of cogrowth series
\begin{align}
  C(z) &= \sum_{n\geq0} c(n) z^n.
\end{align}
This demonstrates the link between the behaviour of the Markov chain and the 
cogrowth of the underlying group.

Let us now turn to expected length of words sampled by the Markov chain. Under 
the stationary distribution, $\pi$, the expected length of words in 
$\mathcal{X}$ is given by
\begin{align}
  \mathbb{E}( |w| )
&= \sum_{w \in \mathcal{X}} |w| \pi(w) 
= \sum_w |w| \frac{ (|w|+1)^{1+\alpha} \beta^{|w|} }{Z}.\\
&= \frac{\sum_{n\geq0} n (n+1)^{1+\alpha} c(n) \beta^{n} }{Z}.
\end{align}

With the Markov chain as described we can select a particular value of $\beta$ 
and compare samples from the chain to exact results for groups where the 
cogrowth series is known (such as $\mathbb{Z}^2$). In practice we would like to 
examine how the expected length changes with $\beta$.
When $\beta$ is very small, $\mathbb{E}(|w|) \equiv \langle n \rangle$ should 
be small since shorter words are favoured. As $\beta$ grows the expectation 
will 
increase. When $\beta$ exceeds $\beta_c$ the expectation will cease to converge 
and words sampled by the chain will become longer and longer.

Rather than running many independent copies of the chain at distinct 
$\beta$-values we use a technique known as \emph{Multiple Markov chains} or
\emph{parallel tempering} which samples at a set of distinct $\beta$-values 
simultaneously. We refer the reader to \cite{\Geyer, \TesiMonte} for a detailed 
description of this method.

When $\alpha=-1$ we can write the mean length explicitly as the log-derivative 
of~$C(z)$:
\begin{align}
  \mathbb{E}( |w| )
&= \left.\left(\frac{z C'(z)}{C(z)} 
\right)\right|_{z=\beta}
  = \left.\left( z \diff{}{z} \log C(z) \right)\right|_{z=\beta}.
\end{align}
One can do similarly for $\alpha=0,1,2,\dots$
\begin{align}
\label{eqn exp alpha}
  \mathbb{E}( |w| )
&= \left.\left( 
\underbrace{
\diff{}{z} z 
\diff{}{z} z
\cdots
}_{1+\alpha \text{ times}}
  \left(z \diff{}{z} \log 
C(z)\right) \right)\right|_{z=\beta}.
\end{align}
We will make use of this expression in Section~\ref{sec results} for groups where the 
cogrowth series is known exactly. This will allow us to compare numerical 
results from an implementation of the Markov chain against exact results. Note 
that in the graphs that follow below we will use $\langle n \rangle$ to denote 
mean length in place of $\mathbb{E}(|w|)$.

\subsection{Alternate sets of elementary moves}
While we have implemented the above Markov chain using conjugations and 
left-insertions as elementary moves, other moves are possible. The proof of 
Lemma~\ref{lem connected} relies on conjugations but only a 
subset of left-insertions. In particular, it only requires left-insertions in $w$ at position $m=0$, that is,  {\em appending} a relation to the extreme right of $w$.

 Hence Corollary~\ref{cor 
main} would still hold for a Markov chain using the following elementary moves
\begin{itemize}
 \item conjugation by $x\in\mathcal{S}$: given $w$, $w' = x^{-1} w x$, 
and
 \item append $R\in\mathcal{R}$: given $w$, $w' = w \mapsto w R$.
\end{itemize}
Note that appending $R$ is always reversible by appending $R^{-1}$. 

Since every word in the state space of the chain represents the identity 
element of the group, we could also introduce a rotation move
\begin{itemize}
 \item rotate at $k$: given $w = uv$ with $|u|=k$, $w'= vu$.
\end{itemize}
In order to ensure this move is reversible by another rotation one needs to 
ensure that no cancellations occur upon freely reducing $w'$. With this 
restriction a rotation by $k$ can always be reversed by a rotation at $|w|-k$.

Of course, if the set of elementary moves is changed then the transition 
probabilities described by equations~\Ref{eqn puv conj} and~\Ref{eqn puv li} 
need to be updated in order to satisfy detailed balance.

\subsection{Avoiding the empty word}
The Markov chain can be implemented to sample from the state space of non-empty 
trivial words. ie from $\mathcal{X}' = \mathcal{X}-\{ \epsilon \}$. To 
do this we alter equations~\Ref{eqn puv conj} and~\Ref{eqn puv li} so that if 
$w'' = \epsilon$ then $w_{n+1} = w_n$. That is, if an elementary move attempts 
to step to the empty word then it is rejected and the current word is kept.

The following lemma shows that with this restriction the Markov chain remains
irreducible.
\begin{lemma}
The Markov chain described above with elementary moves altered to avoid the 
empty word is irreducible on $\mathcal{X}'$, except when applied to the 
presentation $\langle a \mid a^k \rangle, k \in \mathbb{N}$.
\end{lemma}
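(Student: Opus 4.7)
The plan is to reduce irreducibility of the restricted chain on $\mathcal{X}'$ to the statement that the relator set $\mathcal{R}$ is path-connected within $\mathcal{X}'$, and then to check that this holds precisely when the presentation is not $\langle a \mid a^k \rangle$.

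First I would identify the neighbours of $\epsilon$ in the unmodified chain. A conjugation $w \mapsto x w x^{-1}$ of a freely reduced word can decrease $|w|$ by $2$ only when $w = x^{-1} w' x$, so reaching $\epsilon$ would force $w = x^{-1} x$, which is not freely reduced; hence no conjugation ends at $\epsilon$. For a left-insertion to output $\epsilon$, the acceptance rule forbids any cancellation involving $v$, so one needs $v = \epsilon$ (hence $m = 0$) and $uR$ freely reducing to $\epsilon$, which forces $u = R^{-1}$. Thus the neighbours of $\epsilon$ are exactly the elements of $\mathcal{R}$.

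Given $u, v \in \mathcal{X}'$, apply Lemma~\ref{lem connected} to obtain a path $u = w_0 \to \cdots \to w_t = v$ in the original chain, and split it at every visit to $\epsilon$. Each such excursion has the form $R_{\mathrm{in}} \to \epsilon \to R_{\mathrm{out}}$ with $R_{\mathrm{in}}, R_{\mathrm{out}} \in \mathcal{R}$, by the neighbour analysis above. So if $\mathcal{R}$ is connected within $\mathcal{X}'$, we splice an $\mathcal{X}'$-path across each excursion and obtain a path from $u$ to $v$ that stays in $\mathcal{X}'$, proving irreducibility.

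The main tool for connecting $R_1, R_2 \in \mathcal{R}$ within $\mathcal{X}'$ is: whenever $R_1 R_3$ is freely reduced and non-empty, we have $R_1 \to R_1 R_3$ via left-insertion of $R_3$ at $m = 0$ (trivially accepted since $v = \epsilon$), and $R_1 R_3 \to R_3$ via left-insertion of $R_1^{-1}$ at $m = |R_3|$ (accepted because $u R_1^{-1} = R_1 R_1^{-1}$ reduces fully in the first step, leaving $v = R_3$ untouched in the second). Chaining via an $R_3 \in \mathcal{R}$ with both $R_1 R_3$ and $R_3 R_2$ freely reduced and non-empty then yields a four-step $\mathcal{X}'$-path $R_1 \to R_1 R_3 \to R_3 \to R_3 R_2 \to R_2$. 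When there is more than one generator, the closure of $\mathcal{R}$ under cyclic permutation and inversion supplies enough variety of first and last letters to produce such an $R_3$. For one-generator multi-relator presentations the direct $R_3$-construction can fail (every relator is a power of $a$), but one can bridge through non-relator trivial words such as $a^{k_1-k_2}$ obtained by inserting $a^{-k_2}$ into $a^{k_1}$ for distinct relator exponents.

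The excluded case $\langle a \mid a^k \rangle$ is the unique presentation where no such bridge exists: one checks directly that $\mathcal{X}' = \{a^{nk} : n \in \mathbb{Z},\, n \neq 0\}$ splits into positive- and negative-exponent components with every move between them forced through $\epsilon$. The main obstacle will be the final case analysis, showing that in every other presentation, either the direct $R_3$-construction or a detour through $\mathcal{X}' \setminus \mathcal{R}$ succeeds in connecting any two elements of $\mathcal{R}$ inside $\mathcal{X}'$.
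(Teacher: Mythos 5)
Your overall architecture is sound and close to the paper's: both arguments reduce irreducibility on $\mathcal{X}'$ to connecting any two elements of $\mathcal{R}$ within $\mathcal{X}'$ (the paper phrases this as ``the penultimate word in the reduction of any $w\in\mathcal{X}'$ to $\epsilon$ is a relator''), and your two-step moves $R_1 \to R_1R_3 \to R_3$ are valid left-insertions. The genuine gap is in the existence claim for $R_3$. You assert that whenever there is more than one generator, closure of $\mathcal{R}$ under cyclic permutation and inversion supplies an $R_3$ with $R_1R_3$ and $R_3R_2$ freely reduced and non-empty. This is false: for $\langle a,b \mid a^k\rangle$ we have $\mathcal{R}=\{a^k,a^{-k}\}$, and with $R_1=a^k$, $R_2=a^{-k}$ every candidate $R_3$ either begins with $a^{-1}$ (so $R_1R_3$ is not reduced) or ends with $a$ (so $R_3R_2$ is not reduced). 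Your fallback bridge through $a^{k_1-k_2}$ is restricted to one-generator presentations and in any case requires two distinct relator exponents, so it does not apply. But this is precisely the case that determines the exceptional set in the statement: since the lemma excludes only $\langle a\mid a^k\rangle$, you must positively connect $a^k$ to $a^{-k}$ whenever a second generator $b$ exists. The paper does this with a dedicated detour that cannot be realised as cancellation-free concatenations of relators: pass from $a^k$ to $b^{-1}a^kb$ by conjugation, left-insert $a^{-k}$ at the right end to get $b^{-1}a^kba^{-k}$, rotate by conjugations to $ba^{-k}b^{-1}a^k$, left-insert $a^{-k}$ again to reach $ba^{-k}b^{-1}$, and conjugate back to $a^{-k}$. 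Without this (or an equivalent) construction the proof is incomplete exactly where the content of the exception clause lies.

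A secondary point: by insisting that $R_1R_3$ be fully freely reduced you make the intermediate relator harder to find than necessary. The paper connects any $r_u\neq r_v^{\pm1}$ directly, allowing partial cancellation: writing $r_u=r_1r_2$ and $r_v=r_2^{-1}r_3$ with $r_2$ maximal, it left-inserts $r_v$ to reach $r_1r_3\neq\epsilon$ and then left-inserts the cyclic rotation $r_1^{-1}r_2^{-1}\in\mathcal{R}$ after $r_1$ to land on $r_v$. Adopting this would shrink your remaining problem to the single case $r_u=r_v^{-1}$, which is then handled by routing through any third relator when one exists, and by the conjugation detour above when $\mathcal{R}=\{a^{\pm k}\}$.
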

\begin{proof}
Let $w$ be a word in $\mathcal{X}'$. By Lemma~\ref{lem connected} it can be 
reduced to the empty word by a sequence of elementary words. The penultimate 
word in this sequence must be a relator; denote it $r_w$. 

Hence if $u,v \in \mathcal{X}'$ they can be reduced by sequences of elementary 
moves to relators $r_u, r_v$. There are three possibilities
\begin{itemize}
 \item If $r_u = r_v$ then reversing one of the sequences of moves shows that 
$u,v$ are connected a sequence of elementary moves.

\item If $r_u \neq r_v, r_v^{-1}$ then write $r_u = r_1 r_2$ and 
$r_v = r_2^{-1} r_3$, where $r_2$ is as large as possible. Note that $r_2$ 
could be the empty word. Now
\begin{itemize}
 \item left-insert $r_v$ after $r_u$ to obtain $r_u r_v = r_1 r_2 r_2^{-1} 
r_3 \mapsto r_1 r_3 \neq \epsilon$ (after free reduction).
 \item left-insert $r_1^{-1} r_2^{-1}$ after $r_1$ to obtain $r_1 
r_1^{-1} r_2^{-1} r_3 \mapsto r_2^{-1} r_3 = r_v$.
\end{itemize}
Note that since $r_u = r_1 r_2 \in \mathcal{R}$ so is $r_1^{-1} r_2^{-1}$ since 
it is a cyclic rotation of $r_u^{-1} = r_2^{-1} r_1^{-1}$. Thus there must be a 
sequence of elementary moves connecting $u$ and $v$.
\item If $r_u = r_v^{-1}$ then find another relator $r_w \in \mathcal{R}$ so 
that $r_w \neq r_u, r_u^{-1}$ (we discuss the existence of $r_w$ at 
the end of the proof). Now use the previous case to transform $r_u \mapsto 
r_w$ and again to transform $r_w \mapsto r_v$. This creates a sequence of 
elementary moves connecting $u$ and $v$.
\end{itemize}
In all three cases there is a sequence of moves connecting $u$ and $v$. Since 
the probability of each move in the sequence is positive, so the probability of 
the sequence is positive.

Note that the last case breaks down if we are unable to find $r_w \neq r_u, 
r_u^{-1}$. If the presentation has two or more relations, then simply pick 
$r_w$ to be a cyclic permutation of a relation that is not $r_u$. If the group 
has a single relation , then let $r_w$ be a cyclic permutation of 
$r_u$ different from $r_u, r_u^{-1}$. If no such $r_w$ exists then all cyclic 
permutations of $r_u, r_u^{-1}$ must be equal to either $r_u$ or $r_u^{-1}$. We 
now show that this implies the single relation must be of the form $a^k$.

Let $r_u = x_1 x_2 \dots x_k$ be a word in $\mathcal{S}^*$ and let $w = x_2
\dots x_k x_1$ be a cyclic rotation of $r_u$. If $w = r_u$ then we have 
$x_2=x_1, x_3=x_2, \dots, x_k=x_1$ and thus all the symbols in $r_u$ 
must be the same. On the other hand, if $w = r_u^{-1} = x_k^{-1}\dots x_1^{-1}$ 
then we must have that $x_1 = x_1^{-1}$ which is a contradiction. So we must 
have that $r_u = x^k$ for some $x \in \mathcal{S}$.

Now if the group has two or more generators then we can proceed as follows:
\begin{itemize}
 \item Without loss of generality, write $r_u = a^k$. Conjugate by another 
generator $b$ (again without loss of generality) to obtain $b^{-1} a^k b$. 
Left-insert $a^{-k}$ at the end of the word, giving $b^{-1}a^k b a^{-k}$. 
Rotate the word by a sequence of conjugations to $b a^{-k} b^{-1} a^k$. 
Left-insert $a^{-k}$ at the end of the word giving $b a^{-k} b^{-1}$. Finally 
conjugate by $b$ to arrive at $a^{-k} = r_u^{-1} = r_v$.
\end{itemize}
Thus one can connect $r_u$ and $r_v$, and so $u$ and $v$, by a sequence of 
elementary moves. Again, since each move in the sequence has positive 
probability, so does the whole sequence.

Finally if the group has only a single generator then it must be of the form 
$\langle a \mid a^k\rangle$ for some $k \in \mathbb{N}$. Write $w = a^{nk}$ for 
some $n\in\mathbb{N}$. Conjugating $w$ leaves it unchanged, while a 
left-insertion maps $w \mapsto a^{(n-1)k}, a^{nk}, a^{(n+1)k}$. Hence it is not 
possible to transform $a^k$ to $a^{-k}$ by a sequence of elementary moves 
without passing through the empty word $a^0$.
\end{proof}

Notice that the proofs of Lemmas~\ref{lem aperiod} and~\ref{lem detailed 
balance} remain unchanged. Hence Corollary~\ref{cor main} holds and the Markov 
chain on $\mathcal{X}'$ converges to the same stationary distribution. The only 
difference is that the normalising constant, $Z$, changes; it is reduced by 
exactly 1.

When we implemented the Markov chain on $\mathcal{X}$ as described in subsections~\ref{subsec moves} and \ref{subsec trans}, we 
found that it would spend a very large time sampling the empty word. It 
must do this since the empty word is highly probable under the limit 
distribution. In order to force the chain to sample longer words we implemented 
the chain on $\mathcal{X}'$ and used it to generate the results discussed in 
Section~\ref{sec results}. Note that when computing the exact expected mean 
length of the chain on $\mathcal{X}'$ using equation~\Ref{eqn exp alpha}, we 
must ensure that $C(z)$ does not count the empty word and so we replace $C(z)$ 
by $C(z)-1$.

\subsection{Infinitely related groups}\label{subsec inf}
Another possible extension of the algorithm is to consider groups which have 
a finite number of generators but an infinite number of relations, for example
\begin{align}
\mathbb{Z} \wr \mathbb{Z} &= \left\langle a,b \,\middle|\,
\left[a^{b^i}, a^{b^j} \right] \text{ with } i,j \in \mathbb{Z}
\right\rangle.
\end{align}

When performing a left-insertion we choose a particular relation $R \in 
\mathcal{R}$ with probability $P(R)$. The only restrictions on this 
distribution $P$, are that $P(R)>0$ for all $R \in \mathcal{R}$ and that 
$P(R)=P(R^{-1})$. As long as these conditions are satisfied, then 
the detailed balance condition will be satisfied and Corollary~\ref{cor main} 
will hold. Consequently there is no requirement in the above analysis that 
$\mathcal{R}$ be finite. 

We implemented our chain on the above presentation of $\mathbb{Z} 
\wr\mathbb{Z}$ with different choices of $P$. The statistics 
collected from those Markov chains did appear to be independent of the 
distribution $P$, as one would hope, and it was also consistent with the 
amenability of the group. We have not included this extension 
in the present work; we plan to include it in a future work on precise analysis 
of statistics collected from the chain, together with other infinitely presented groups.

\section{Numerical results}
\label{sec results}
In this section we discuss the application of the Markov chain to concrete 
examples of finitely presented groups. We chose a range of amenable and 
non-amenable groups including those for which the cogrowth series is known 
exactly. Additionally we have applied the Markov chain to Thompson's group 
$F$ --- whose amenability is currently an open problem.

The chain on $\mathcal{X}'$ was implemented in \texttt{c++} with words stored as 
linked lists. The linked-list data structure makes the computer code associated 
with conjugation and left-insertion relatively straight-forward. To ensure 
correctness of the implementation, two separate programs were created 
independently by the second and third authors, and results compared. We used the 
GNU Scientific Library\footnote{Available at 
\texttt{http://www.gnu.org/software/gsl/} at time of writing.} to generate 
pseudo-random numbers to decide transitions within the chain. At each beta value 
we sampled approximately $10^{10}$ elementary moves. Each run consisted of 
$100$ $\beta$-values and took approximately 1 week on a single node of a 
computing cluster at the Western Canada Research Grid (Westgrid). Each node was roughly 
equivalent to a modest desktop computer running Linux. 

We remark that for some groups it is easier to compute the generating function 
of the number of \emph{all} words equivalent to the identity, not just those 
that are freely reduced. This series for $\mathbb{Z}^2$, for example, is
\begin{align}
\label{eqn z2 returns}
  D(z) &= \sum_{n \geq 0} \binom{2n}{n}^2 z^{2n}
  = \frac{2}{\pi} K(4z)
\end{align}
where $K(z)$ is the complete elliptic integral of the first kind. We refer the 
reader to \cite{\BScogrowth} for a short proof of the above. It is 
then straight-forward to transform this series to cogrowth series using the 
following result of Woess:
\begin{lemma}[Lemma 1 of \cite{\Woess}]
\label{lemma woe}
  Let $d(n)$ be the number of words of length $n$ equal to the identity in a 
given group presentation, and let $D(z)=\sum d(n) z^n$ be the associated 
generating function. Let $2q = |\mathcal{S}|$, then 
\begin{align}
\label{eqn woe}
C(z) &= \frac{1-q+q\sqrt{1-4(2q-1)z^2}}{1-4q^2z^2} D\left( 
\frac{1-\sqrt{1-4(2q-1)z^2}}{2(2q-1)z} 
\right)
\end{align}
\end{lemma}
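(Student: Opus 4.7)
The plan follows the classical combinatorial argument originally due to Grigorchuk and Cohen, and formalized by Woess. The strategy is to establish a bijection between length-$n$ words $w \in \mathcal S^*$ equal to the identity in $G$ (counted by $d(n)$) and pairs consisting of a freely reduced trivial word $\tilde w$ --- necessarily a cogrowth word, since free reduction preserves the group element --- together with a decoration of null excursions in the free group $F(a_1,\ldots,a_k)$ inserted at the $m+1$ gaps of $\tilde w$, where $m = |\tilde w|$.

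First I would compute the series $U(z) = \frac{1-\sqrt{1-4(2q-1)z^2}}{2(2q-1)z}$ directly from the $2q$-regular tree structure of the Cayley graph of $F(a_1,\ldots,a_k)$. A standard first-step analysis identifies $U$ with a generating function for constrained tree walks (avoiding one chosen neighbor direction at one endpoint); recursive self-similarity of the tree then yields the quadratic $(2q-1)z\,U^2 - U + z = 0$, whose branch vanishing at $z = 0$ is the displayed expression. Next, for each cogrowth word $\tilde w = x_1 \cdots x_m$, I would enumerate all unreduced words reducing to $\tilde w$: each is obtained by inserting a null excursion (a word freely reducing to the empty word) at each of the $m+1$ gaps of $\tilde w$, with the excursion inserted between $x_i$ and $x_{i+1}$ constrained so as not to begin with $x_i^{-1}$ and not to end with $x_{i+1}^{-1}$, for otherwise an adjacent skeleton letter would cancel and the free reduction would be shorter than $\tilde w$. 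A tree-structure computation shows that the combined weight of a skeleton letter together with the constrained excursion adjacent to it is exactly $U(z)$; the two boundary gaps carry only one such constraint each, and so supply a separate scalar prefactor depending only on $z$ and $q$.

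Summing over all length-$m$ cogrowth words and all $m \geq 0$ produces an identity of the form $D(U(z)) \cdot P(z) = C(z)$ for some explicit algebraic $P(z)$, which rearranges to \Ref{eqn woe}. The main technical obstacle is the careful bookkeeping of the two boundary gaps --- verifying that $P(z)$ collects into exactly $\frac{1-q+q\sqrt{1-4(2q-1)z^2}}{1-4q^2z^2}$, and that each interior contribution really reduces to the substitution $z \mapsto U(z)$ inside $D$ rather than some slight variant. A useful sanity check along the way is to specialise to the free group itself, where $C(z) \equiv 1$ and \Ref{eqn woe} must reduce to the classical closed-form return generating function on the $2q$-regular tree.
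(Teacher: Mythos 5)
The paper does not actually prove this lemma --- it is quoted verbatim from Woess \cite{\Woess} with no argument given --- so there is no internal proof to compare against, and I assess your argument on its own terms. Your strategy (decompose an arbitrary trivial word into its freely reduced skeleton plus null excursions at the gaps, and identify the per-gap weight with the first-passage series $U(z)$ on the $2q$-regular tree via $(2q-1)zU^2-U+z=0$) is exactly the standard Grigorchuk--Cohen--Woess argument, and those pieces are correct. The problem is the final step. Summing $c(m)\,U(z)^m$ over skeletons of length $m$, with the Green's-function factor $\bigl(1-2qzU(z)\bigr)^{-1}=\frac{1-q+q\sqrt{1-4(2q-1)z^2}}{1-4q^2z^2}$ collecting the boundary contribution, your decomposition proves
\begin{align*}
D(z) \;=\; \frac{1-q+q\sqrt{1-4(2q-1)z^2}}{1-4q^2z^2}\;C\!\left(\frac{1-\sqrt{1-4(2q-1)z^2}}{2(2q-1)z}\right),
\end{align*}
that is, the displayed identity with $C$ and $D$ interchanged: the substitution $z\mapsto U(z)$ necessarily lands inside $C$, not inside $D$, because it is the skeleton letters that carry the weight $U$. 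Your assertion that the sum ``produces an identity of the form $D(U(z))\cdot P(z)=C(z)$'' therefore does not follow from the decomposition you describe. To solve for $C$ one must instead substitute $z\mapsto U^{-1}(z)=z/(1+(2q-1)z^2)$, which gives the equivalent form $C(z)=\frac{1-z^2}{1+(2q-1)z^2}\,D\!\left(\frac{z}{1+(2q-1)z^2}\right)$, with a \emph{rational} argument inside $D$ --- not the algebraic $U(z)$ appearing in \Ref{eqn woe}.

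Your own proposed sanity check would have exposed this, so do carry it out rather than deferring it: for the free group $C(z)\equiv 1$ and $D$ equals the tree return series, so the statement as printed would force the product of the tree Green's function with its own composition under $U$ to equal $1$, which already fails at order $z^2$ (the product is $1+4qz^2+\cdots$). Likewise, for $\mathbb{Z}^2$ the printed formula returns $1+8z^2+\cdots$ although $c(2)=0$. In other words, equation \Ref{eqn woe} as transcribed in the paper has the two series transposed, and no correct argument can ``rearrange'' into it; your outline becomes a proof precisely of the transposed identity (or of its rational-substitution inverse above), which is what the authors must in fact have implemented for their exact curves. A smaller issue: your letter-level constraints on the excursions (not beginning with $x_i^{-1}$, not ending with $x_{i+1}^{-1}$) are necessary but not sufficient to make the skeleton-plus-excursion decomposition bijective, since an excursion can interact with the skeleton after partial reduction even when its extremal letters do not immediately cancel; the clean canonical decomposition is by first hitting times of the geodesic vertices of the tree walk, which is also what makes the per-gap weight exactly $U(z)$ and the terminal factor exactly $\bigl(1-2qzU(z)\bigr)^{-1}$.
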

We remind the reader that we have implemented the chain on $\mathcal{X}' = 
\mathcal{X}-\{\epsilon\}$ and so we must replace $C(z)$ by $C(z)-1$ when 
computing exact expectations using equation~\Ref{eqn exp alpha}.

\subsection{Application to $\mathbb{Z}^2$}
Our first example is $\mathbb{Z}^2$ which is amenable and whose cogrowth series 
is known exactly (see above). In Figure~\ref{fig bs11} we plot the exact 
expectation of the length of words as a function of $\beta$ with $\alpha=1$ (the solid curve in the plot); 
this curve was computed by combining equations~\Ref{eqn woe},~\Ref{eqn 
z2 returns} and~\Ref{eqn exp alpha}.

We overlay the average length of words observed in the implementation of 
our chain running with $\alpha=1$ and a range of $\beta$-values (the crosses in the plot). The figure 
demonstrates that there is excellent agreement between the exact results and 
the numerical estimates. Similar agreement was found for different values of 
$\alpha$.

\begin{figure}[h!]
 \begin{center}
  \includegraphics[height=5cm]{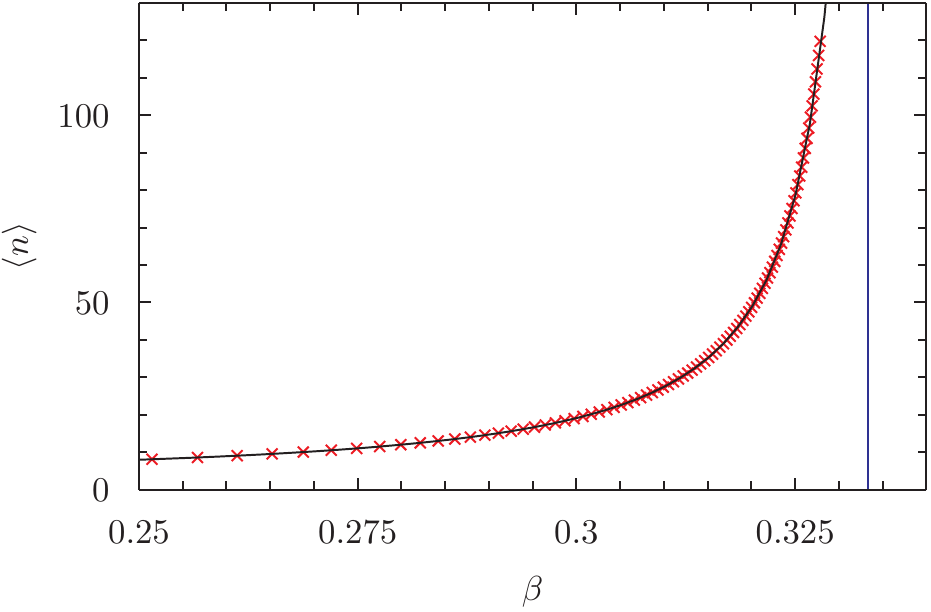}
  \end{center}
\caption{The mean length of sampled words plotted against $\beta$ for 
the standard presentation of $\mathbb{Z}^2$ with $\alpha=1$. The crosses 
indicate data obtained from an implementation of the algorithm while the curve 
indicates the expectation derived from the exact cogrowth series for the group. 
The vertical line indicates $\beta_c=\nicefrac{1}{3}$.}
\label{fig bs11}
\end{figure}

Notice that the data show that the mean length increases with $\beta$ and that 
it becomes larger and larger as $\beta$ approaches $\nicefrac{1}{3}$. Indeed, 
since the group is amenable, we know that the cogrowth is exactly $3$ (by 
\cite{\Grig, \Cohen}), and so the radius of convergence of the cogrowth series 
is $\nicefrac{1}{3}$.

\subsection{Application to examples of Kouksov}
The cogrowth series is known in closed form for very few groups. In 
\cite{\Kouksov} Kouksov gives explicit formulae for some free products. We 
examined the following three:
\begin{align}\label{presKouksov}
 K_1 &= \langle a,b \,|\, a^2, b^3 \rangle, \nn
K_2 & =  \langle a,b \,|\, a^3, b^3 \rangle, \nn
K_3 & =  \langle a,b,c \,|\, a^2, b^2, c^2 \rangle.
\end{align}
whose cogrowth series are given by
\begin{align}
C_1(t) &= \frac{(1+t)\left( f_1(t) + (2-t+6t^2)\sqrt{
f_2(t) }\right)}
{2(1-3t)(1+3t^2)(1+3t+3t^2)(1-t+3t^2)}\\
%
%
C_2(t) &= \frac{(1+t)(-t+\sqrt{1-2t-t^2-6t^3+9t^4})}{(1-3t)(1+2t+3t^2)}, 
\text{ and}\\
%
C_3(t) &= \frac{-1-5t^2+3\sqrt{1-22t^2+25t^4}}{2(1-25t^2)}
\end{align}
where $f_1(t) = -t + t^2-8t^3+3t^4-9t^5$ and $f_2(t) = 
1-2t + t^2-6t^3 - 8t^4-18t^5+9t^6-54t^7+81t^8$. The radii of convergence 
of these cogrowth series are $0.3418821478, 0.3664068598$ 
and $0.2192752634$ respectively (to ten significant digits). Hence the cogrowth 
is strictly smaller than the value required for amenability being 3, 3 and 5, 
respectively. Indeed each of these contains a non-abelian free subgroup and so 
are non-amenable; in the case of the groups $K_1$ and $K_2$ the free subgroups 
are $F((ab),(ab^{-1}))$, and for $K_3$ the free subgroup is $F((ab),(ac))$.

\begin{figure}
  \centering
  \subfloat[$\langle a,b|a^2,b^3 \rangle$ sampled with $\alpha = 0$.]{
    \includegraphics[height=5cm]{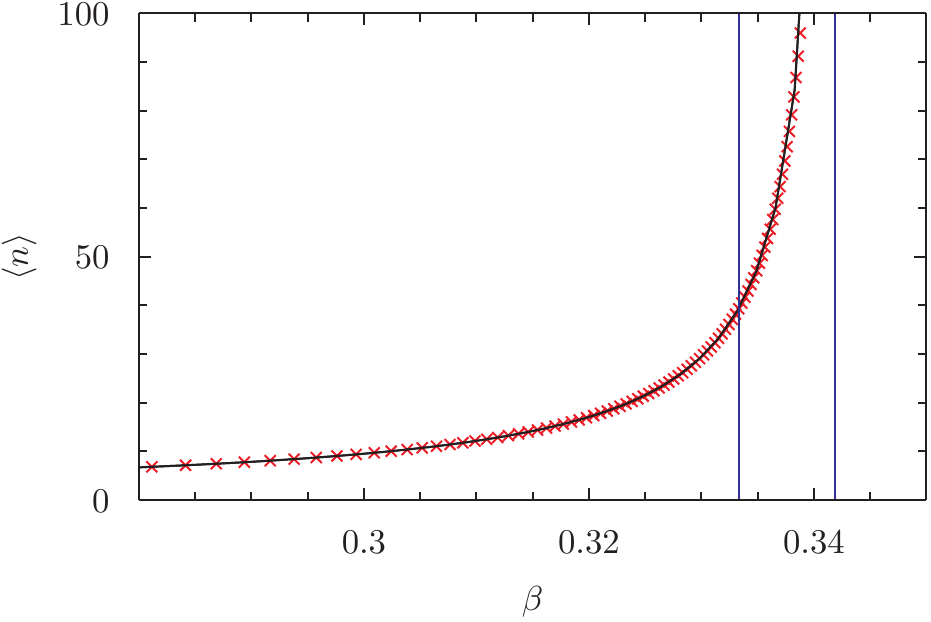}}\\
  \subfloat[$\langle a,b|a^3,b^3 \rangle$ sampled with $\alpha = 0$.]{
    \includegraphics[height=5cm]{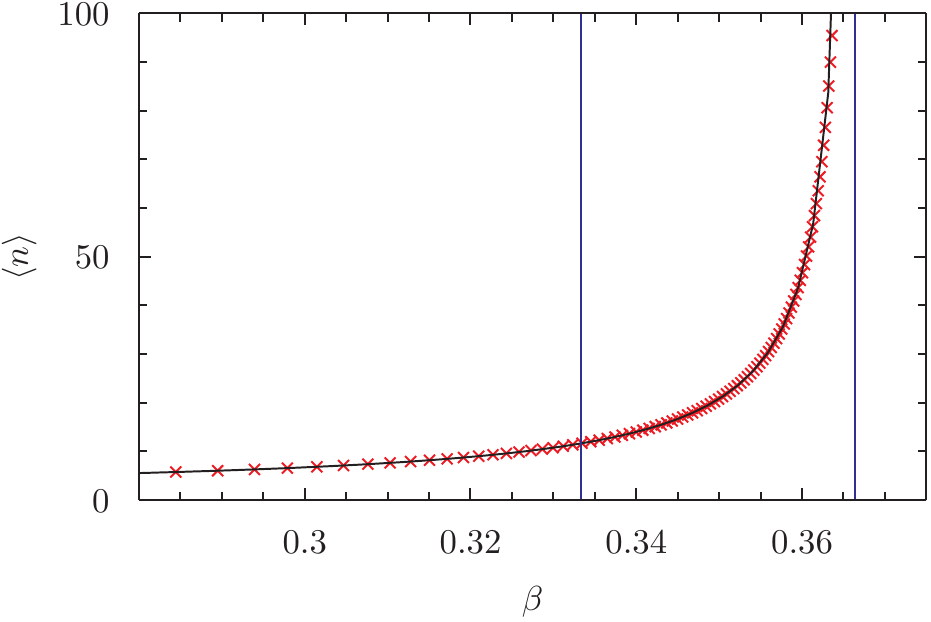}}\\
  \subfloat[$\langle a,b,c|a^2,b^2,c^2 \rangle$ sampled with $\alpha =1$.]{
    \includegraphics[height=5cm]{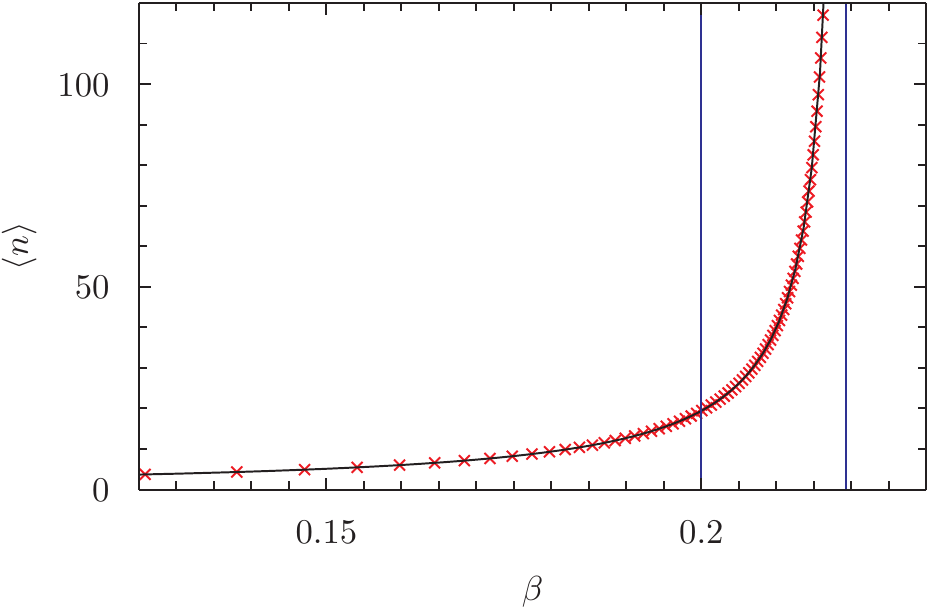}}
  \caption{Mean length of sampled words plotted against $\beta$ for $K_1, 
K_2$ and  $K_3$. The crosses indicate data obtained from the algorithm, while 
the curves indicate the expectation derived from the exact cogrowth series 
for each group. The first vertical lines in each plot indicates 
$\beta=\nicefrac{1}{3}, \nicefrac{1}{3}, \nicefrac{1}{5}$ (respectively)  and
also the reciprocal of the cogrowth where the statistic will diverge --- being
$0.3418821478, 0.3664068598$ and $0.2192752634$ respectively.}
  \label{fig:kuksov plots}
\end{figure}

In Figure~\ref{fig:kuksov plots} we compare data obtained from our algorithm 
with the exact expectation, which was computed by combining the exact cogrowth 
series above with equation~\Ref{eqn exp alpha}. Note that because the chain 
avoids the empty word, we modify the above generating functions by 
subtracting 1 from each (being the contribution from the empty word). As was 
the case for $\mathbb{Z}^2$, there is excellent agreement between the numerical 
and exact results.

\subsection{Application to $\mathrm{BS}(N,N)$}
The cogrowth series for $\mathrm{BS}(N,N)$ is not known in closed form 
for $N\geq 2$. In recent work \cite{\BScogrowth} the authors and Tom Wong demonstrate that 
the cogrowth series for $\mathrm{BS}(N,N) = \langle a,b \mid a^Nb a^{-N} 
\bbar \rangle$ is D-finite, that is, the series $C(z)$ satisfies a linear 
differential equation with polynomial coefficients. This work allows the 
cogrowth to be computed exactly for moderate values of $N$ 
in polynomial time. 

It follows that the cogrowth series can be computed to (essentially) any 
desired number of terms. Using that truncated series and equation~\Ref{eqn exp 
alpha} we the compute expectation of the length to any desired precision. In 
Figure~\ref{fig:bs2233 plots} we display the expected mean length against data 
obtained from the Markov chain. As with previous examples,  we see excellent agreement.

\begin{figure}[h!]
  \centering
  \subfloat[$\mathrm{BS}(2,2)$ sampled with $\alpha = 1$.]{
    \includegraphics[height=5cm]{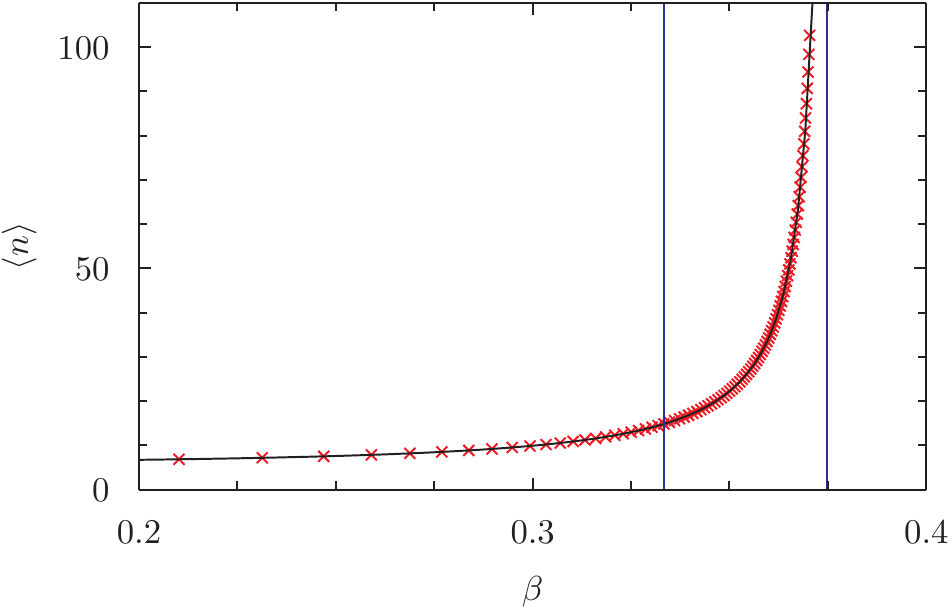}}\\
  \subfloat[$\mathrm{BS}(3,3)$ sampled with $\alpha = 1$.]{
    \includegraphics[height=5cm]{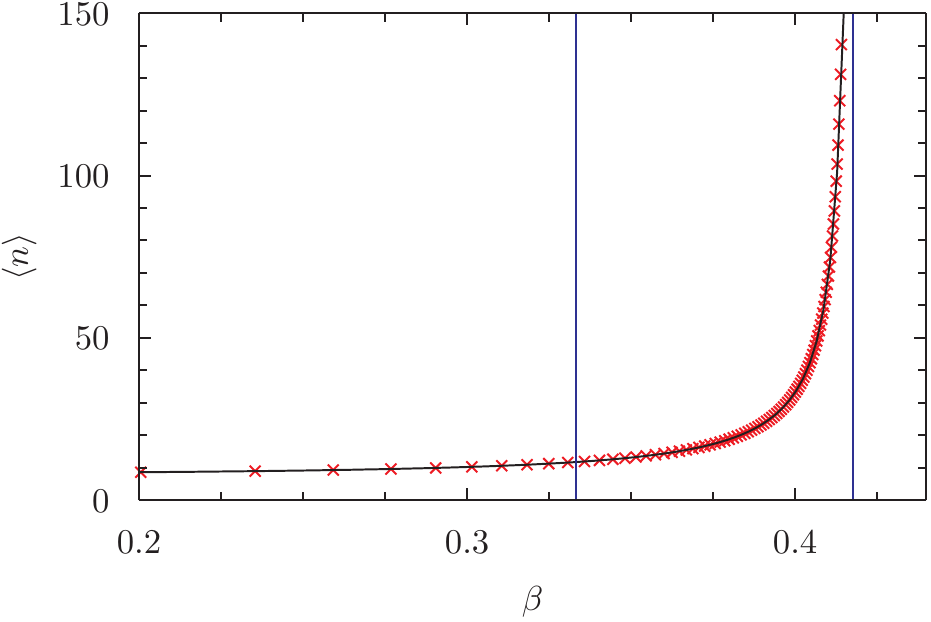}}
  \caption{Mean length of sampled words plotted against $\beta$ 
for $\mathrm{BS}(2,2)$ and $\mathrm{BS}(3,3)$. The crosses indicate data 
obtained from the algorithm, while the curves indicates the expectation derived 
from the cogrowth series for each group. The vertical lines indicate 
$\beta=\nicefrac{1}{3}$ and also the reciprocal of the cogrowth being 
$0.3747331572$ and $0.417525628$ respectively (see \cite{\BScogrowth}). We see 
excellent agreement between our numerical data and the exact results.}
  \label{fig:bs2233 plots}
\end{figure}

\subsection{Application to $\mathrm{BS}(N,M)$ with $N \neq M$.}
The work \cite{\BScogrowth} is mostly concerned with $\mathrm{BS}(N,N)$, but the
central enumerative result (Proposition~3.6 in \cite{\BScogrowth}) also holds 
for $\mathrm{BS}(N,M) = \langle a,b \mid a^Nb a^{-M} \bbar \rangle$. The 
authors 
derive a system of three $q$-algebraic 
equations which can be iterated to compute the first few terms of $C(z)$. This 
is more efficient than a brute-force approach but it still requires exponential 
time and memory.

Explicitly the authors define a two-variable generating function
\begin{align}
  G(z;q) &= \sum_n z^n g_n(q) & \text{where } g_n(q) &= \sum_k g_{n,k} q^k
\end{align}
where $g_{n,k}$ is the total number of words (not just those that are freely 
reduced) of length $n$ equal to $a^k$. Thus $g_{n,0} = d(n)$ defined in 
Lemma~\ref{lemma woe}. When $N=M$, $g_n(q)$ has at most $2n+1$ non-zero terms, 
however when $N \neq M$ the number of non-zero terms is exponential in $n$.

Due to the exponential constraint, we are only able to compute the 
first few terms of cogrowth series exactly. For example we were only able to 
compute the first 60 terms of $D(z)$ (and hence $C(z)$ by equation~\Ref{eqn 
woe}) for $\mathrm{BS}(1,2)$. Using those truncated series and 
equation~\Ref{eqn exp alpha} we get a lower bound on the exact expected mean 
length --- this is the solid curve in Figures~\ref{fig:bs1213 plots} 
and~\ref{fig:bs23 plots}.

When we generated series by the above method we noticed that the polynomials 
$g_n(q)$ are dominated by the central few terms around $q^0$, while the other 
terms (being the vast majority) were negligible. This suggests an alternate 
means to estimate $D(z)$ (and so $C(z)$) --- at each iteration of the system of 
$q$-algebraic equations we discarded all but the central $2n+1$ terms of 
$g_{n,k}$. The resulting series $\tilde{G}(z;q)$ is dominated term-by-term by 
the true $G(z;q)$, but can be computed to far more terms (indeed it is 
comparable in effort to the computation for $\mathrm{BS}(N,N)$ described 
above). We have also estimated the exact expectation using this method; it 
also gives underestimates of the true expectation. The curve is plotted as 
dotted lines in Figures~\ref{fig:bs1213 plots} and~\ref{fig:bs23 plots}.

In all four plots we see good agreement between the two estimates and the data 
from the Markov chain. As $\beta$ is increased the two estimates fall below the 
Markov chain data, with the estimate from truncated series  distinctly 
lower than the estimate from approximate series. This is consistent with the 
Markov chain giving accurate estimates of the true expected length for $\beta$ 
even quite close to $\beta_c$. In the cases of 
$\mathrm{BS}(1,2)$ and $\mathrm{BS}(1,3)$ we know the reciprocal of the cogrowth is $\nicefrac{1}{3}$ 
since they are amenable, and the Markov chain data confers with this.

\begin{figure}
  \centering
  \subfloat[$\mathrm{BS}(1,2)$ sampled with $\alpha = 1$.]{
    \includegraphics[height=5cm]{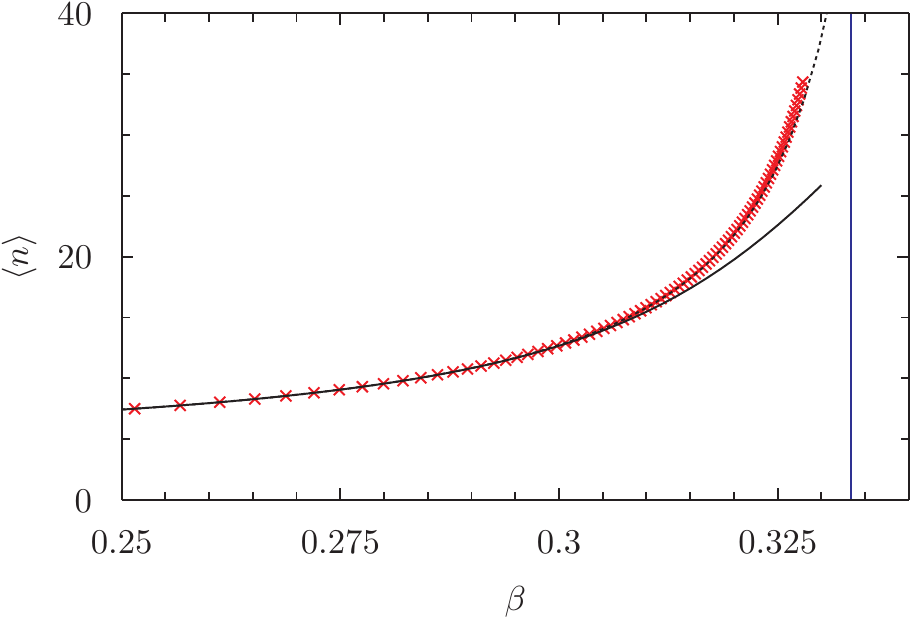}}\\
  \subfloat[$\mathrm{BS}(1,3)$ sampled with $\alpha = 2$.]{
    \includegraphics[height=5cm]{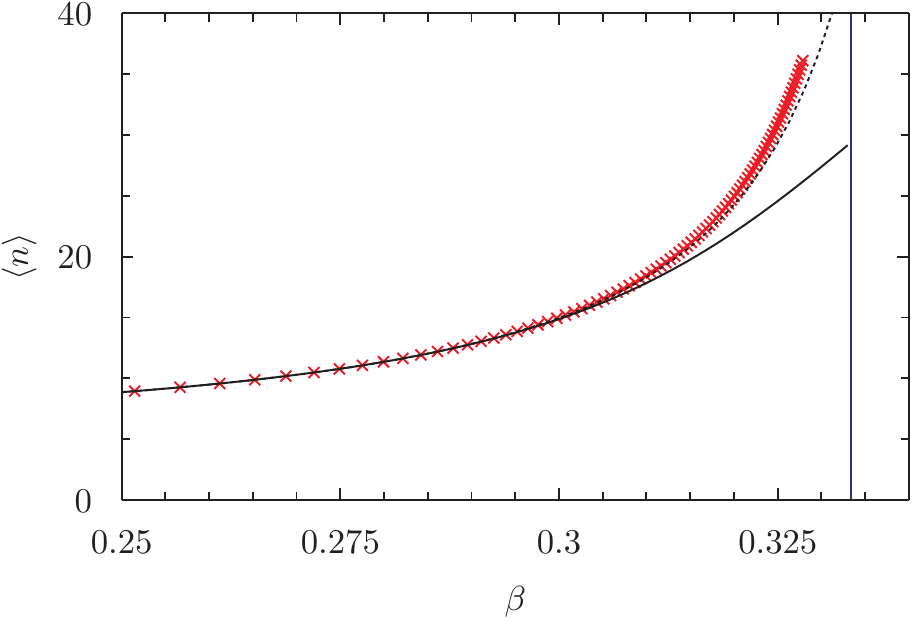}}
  \caption{Mean length of freely reduced trivial words in Baumslag-Solitar
groups $\mathrm{BS}(1,2)$ and $\mathrm{BS}(1,3)$ at different 
values of~$\beta$ and $\alpha$ as indicated. The sampled points are indicated 
with crosses, while the vertical line indicates $\beta_c=\nicefrac{1}{3}$. The 
solid line indicates estimates of the exact expectation derived from the 
exact but truncated cogrowth series. The dotted line indicates estimates of the 
expectation derived using the approximation of the cogrowth (as described in 
the 
main text). At low and moderate values of $\beta$ there is excellent agreement, 
but as $\beta$ increases the Markov chain lies above both of the 
approximations of the expectation which is consistent with the approximations 
being underestimates.}
  \label{fig:bs1213 plots}
\end{figure}

\begin{figure}
  \centering
  \subfloat[$\mathrm{BS}(2,3)$ sampled with $\alpha=1$.] {
  \includegraphics[height=5cm]{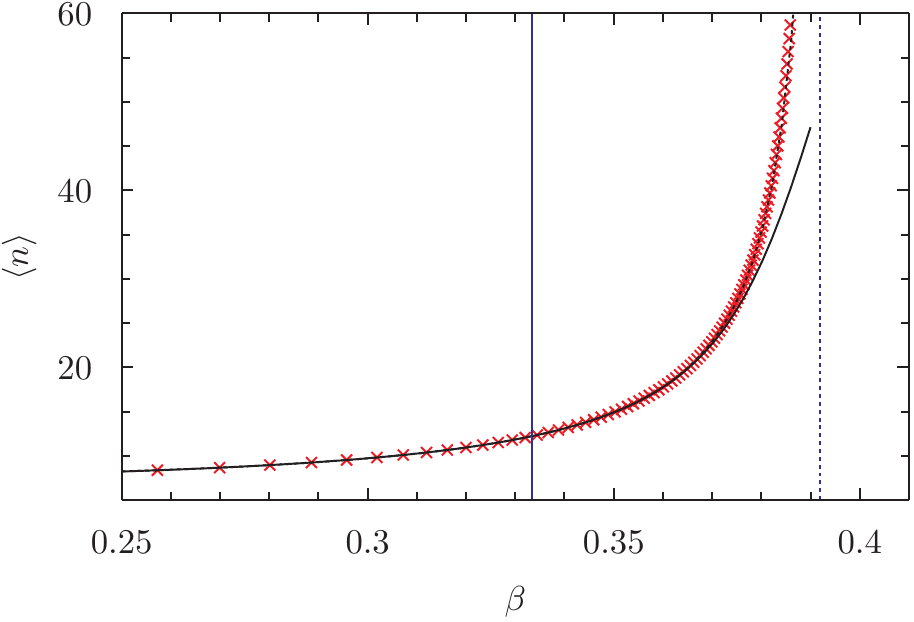} }\\
  \subfloat[$\mathrm{BS}(3,5)$ sampled with $\alpha=0$.] {
  \includegraphics[height=5cm]{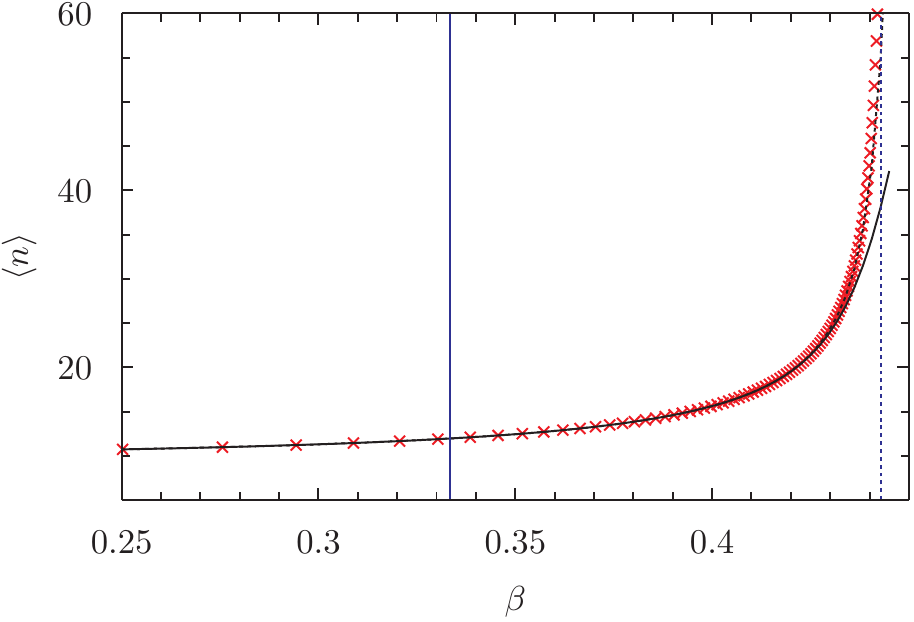} }
  \caption{The mean length of trivial words in $\mathrm{BS}(2,3)$ and 
$\mathrm{BS}(3,5)$ at different values of $\beta$. The sampled points are 
indicated with crosses, while the solid vertical line indicates 
$\beta_c=\nicefrac{1}{3}$. The dotted vertical lines indicate the estimated 
critical value of $\beta$ from analysis of the truncated series. As per the 
previous figure, the solid line indicates estimates of the expectation from 
truncated series while the dotted line indicates estimates from the approximate 
series (see the main text). At low and moderate values of $\beta$ there 
is excellent agreement, but as $\beta$ increases the Markov chain lies above 
both of the approximations of the expectation.}
  \label{fig:bs23 plots}
\end{figure}

\subsection{Application to the basilica group}
We now turn to the first of two groups for which we know very little about 
the cogrowth series --- namely the basilica group first studied by Grigorchuk and  Zuk  \cite{Grigorchuk02}. This 
group has an infinite presentation
\begin{align}
G &= \left\langle a,b \,\middle|\,   \left[ a^n,[a^n,b^n]\right] 
  \text{ and }
  \left[ b^n,[b^n,a^{2n}] \right] \text{ where $n$ is a power of $2$}
\right\rangle
\end{align}
where we have used the notation $[x,y] = x^{-1}y^{-1}xy$ and $x^y = 
y^{-1}xy$. This group embeds in the finitely presented group 
\cite{Grigorchuk02} 
\begin{equation}\label{eqn:basilicatilde}
\W{G} = \left\langle a,b \,\middle|\, a^{b^2}=a^2,\,\left[ \left[ [a,b^{-1}] 
,a\right],a\right] = 1 \right\rangle .
\end{equation}
Bartholdi and Virag proved  that  both  $G$ and $\W{G}$ are amenable \cite{\BartV}, and 
separate the classes of amenable and subexponentially amenable groups.

As noted in subsection \ref{subsec inf}
 our algorithm can be extended to infinite 
presentations, however for this article we restricted our study to the finitely presented group $\W{G}$. We ran the algorithm on three presentations derived from the above 
presentation by simple \emph{Tietze transformations} (see \cite{\LS} p. 89).  
The first is obtained from the above by putting $c=[a,b^{-1}]$, and the second 
by putting $c=a^b$. Simplification gives the representations
\begin{align}
  \W{G} &=  
    \left\langle a,b,c \,\middle|\,
    c=[a,b^{-1}],  a^{b^2}=a^2, \left[ [c,a],a \right]=1 
    \right\rangle , \label{eqnA4.4} \\
  \W{G} &=  
    \left\langle a,b,c \,\middle|\, 
c=a^b, c^b=a^2, c^{-1}aca^{-1}c^{-1}a^{-1}ca=1 
    \right\rangle . 
\label{eqnA4.5}
\end{align}
We implemented the Markov chain for both of these presentations. We 
plot the mean length of words sampled from the chains in 
Figure~\ref{fig:figurebasilica}. An immediate observation is that the mean 
length is remarkably insensitive to changes in $\beta$. Because of this we 
found that our data was far harder to analyse than for the other groups 
discussed above. This is compounded by the absense of cogrowth series data for 
comparison. 

Because this data appeared so insensitive to $\beta$, we also examined a 
measure 
of the statistical error in our estimates. To do this we consider samples from 
the Markov chain as a time series of length $N$. We slice this sequence into 
$M$ non-overlapping blocks of length $\nicefrac{N}{M}$. Let the mean length 
observed in the $i^\mathrm{th}$ such block be denoted $\langle n \rangle_i$. 
The variance in these mean lengths and our error estimate are then given by
\begin{align}
 \mathrm{var} &= \frac{1}{M^2} \cdot \sum_i \langle n \rangle_i^2 
 - \left( \frac{1}{M} \sum_i \langle n \rangle_i \right)^2  \\
 \mathrm{err} &= \sqrt{\frac{\mathrm{var}}{M-1}}
\end{align}
Our typical runs consisted of around $10^3$ blocks each of length 
approximately $10^7$. We made estimates of autocorrelations at the highest 
values of $\beta$ and found them to be much shorter than the block length. This 
validates the above estimate of the error.

We repeated this analysis on  the examples studied above (the Baumslag-Solitar groups and the examples of 
Kouksov), and found that the error estimates were very small. Indeed, if we were to place 
error-bars on our plots of the mean length they would be smaller than the 
crosses used to denote the data --- except very close to $\beta_c$. This is 
consistent with our observation that our Markov chain data agrees closely with 
exact results. It also indicates another method of detecting the location of a 
singularity --- we expect that the error estimate will diverge as $\beta \to 
\beta_c$.

\begin{figure}[ht!]
  \centering
  \subfloat[Mean length with $\alpha=5$.]{
   \includegraphics[height=5cm]{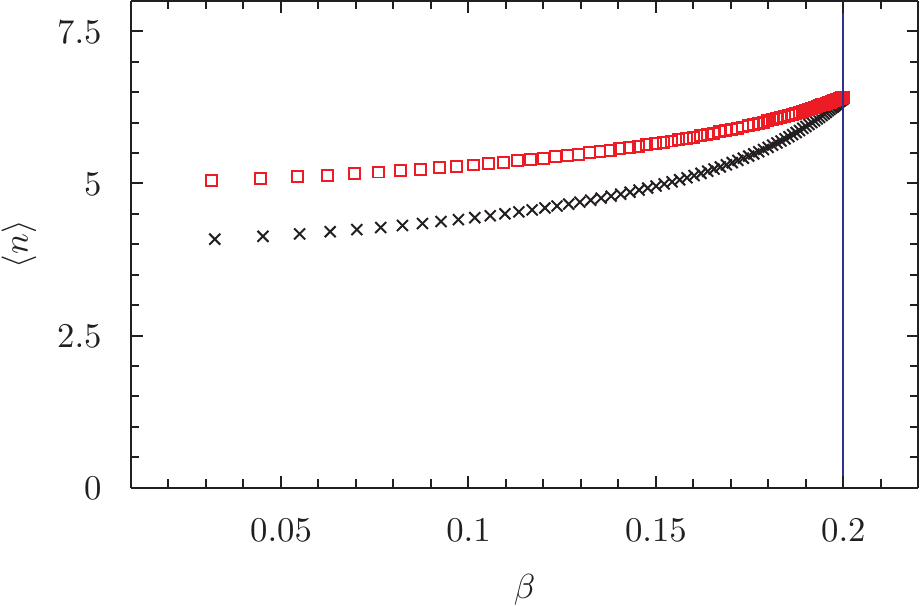}}\\
 \subfloat[$\hbox{err}^{-1}$ with $\alpha=5$.]{
   \includegraphics[height=5cm]{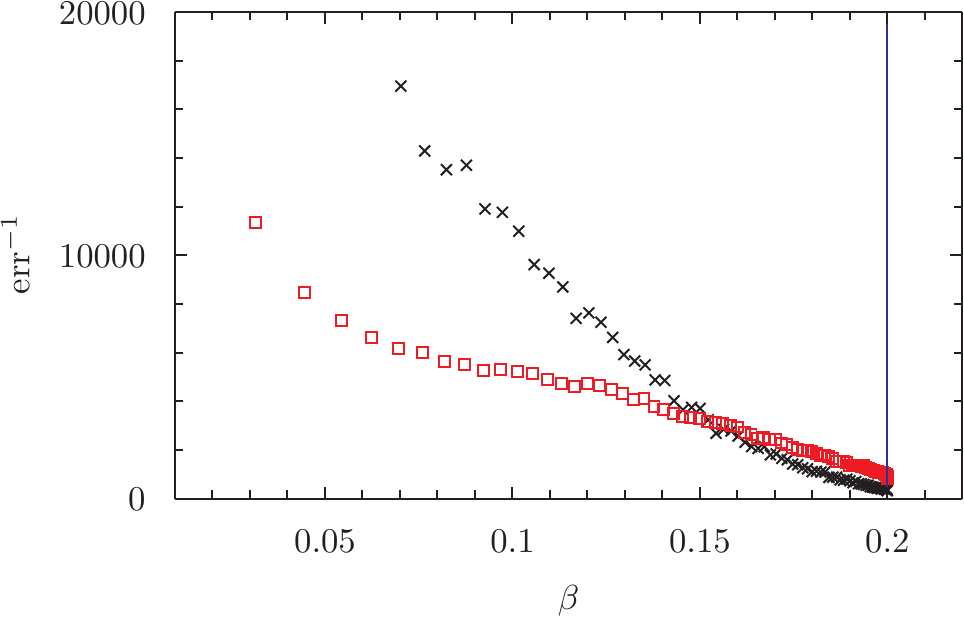}}
 \caption{(A) The mean length of words plotted against $\beta$ for two 
presentations of group $\W{G}$. Data points indicated by $\square, \times$ 
corresponds to~\Ref{eqnA4.4} and~\Ref{eqnA4.5} respectively.  (B) The 
reciprocal of the estimated error against $\beta$. Notice that as $\beta \to 
\nicefrac{1}{5}$ the error begins to diverge.}
 \label{fig:figurebasilica}
\end{figure}

We have plotted the reciprocal of our error estimate against $\beta$ for these
two presentations in Figure~\ref{fig:figurebasilica}.  We see a much clearer 
signal of divergence closer to $\beta_c=\nicefrac{1}{5}$ than we do for the 
mean length data. 

We studied a third presentation, in which the relators are of shorter and 
comparable lengths. We set  $c=a^b, d=[a,b^{-1}], e=[d,a]$ in equation~\ref{eqn:basilicatilde} to obtain the 
presentation:
\begin{align}
  \W{G} &=  
    \left\langle a,b,c,d,e \,\middle|\, c=a^b, d=[a,b^{-1}], e=[d,a], c^b=a^2, 
[e,a]=1
    \right\rangle . 
\label{eqnA4.6}
\end{align}
We found that the mean-length data from this presentation was much better 
behaved and gave a clearer signal of a singularity at $\beta = 
\nicefrac{1}{9}$. See Figure~\ref{fig basilica3}. We also analysed the error 
data and estimate that the reciprocal of the error goes to zero as $\beta \to 
1.115\pm0.005$. The data from this presentation is consistent with the 
amenability of $\W{G}$. Overall, the data from all three presentations is 
consistent with the group being amenable.

\begin{figure}
  \centering
  \subfloat[Mean length with $\alpha=1$.]{
   \includegraphics[height=5cm]{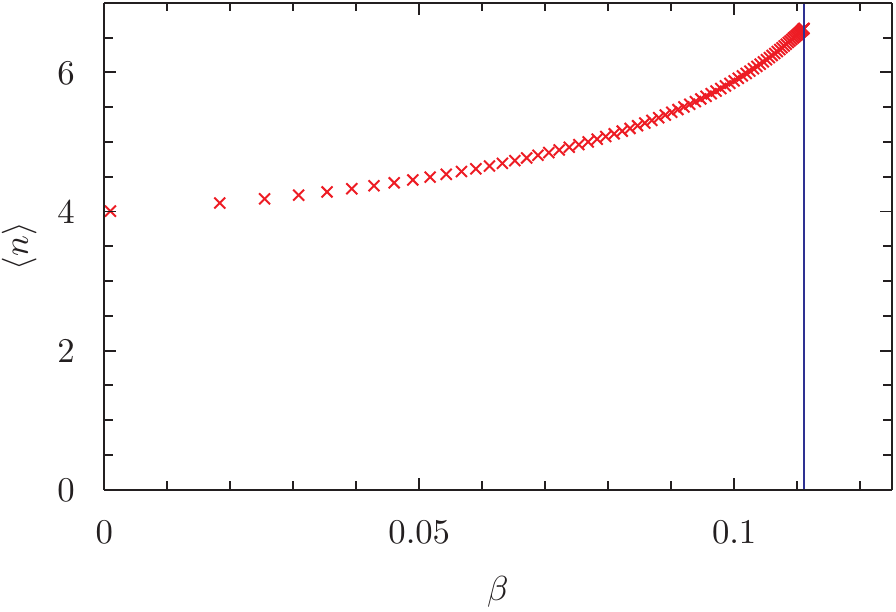}}\\
 \subfloat[$\hbox{err}^{-1}$ with $\alpha=1$.]{
   \includegraphics[height=5cm]{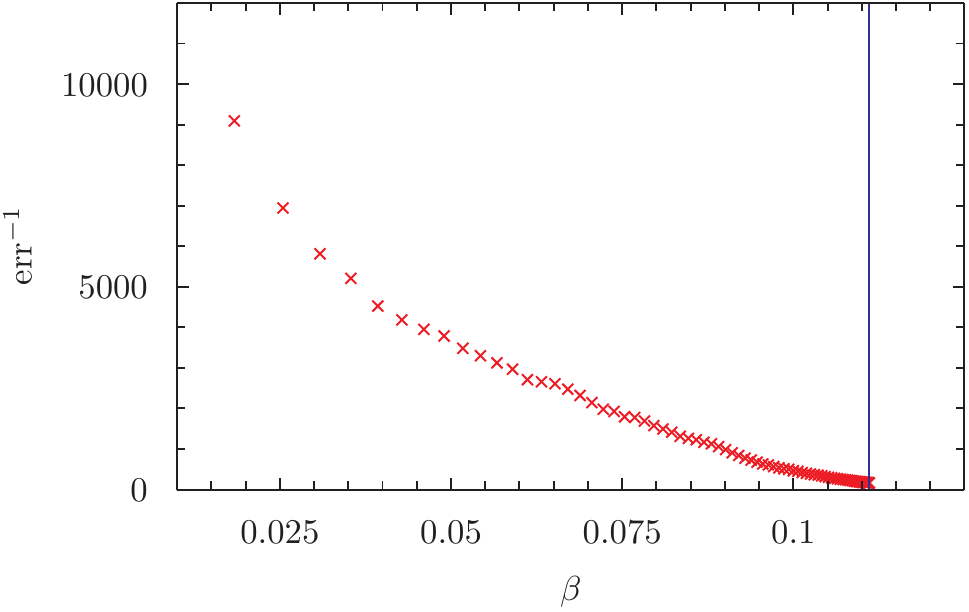}}
 \caption{(A) The mean length of words plotted against $\beta$ for the 
presentation of group $\W{G}$ in equation~\Ref{eqnA4.6}.  (B) The 
reciprocal of the estimated error against $\beta$. Notice that as $\beta \to 
\nicefrac{1}{9}$ the error begins to diverge.}
 \label{fig basilica3}
\end{figure}

\subsection{Application to the Thompson's group $F$}
We now turn to our last application, Thompson's group $F$. 
 We 
started by examining its standard finite presentation
\begin{align}
F = \big\langle a,b &\,\big|\, [a\bbar,\abar b a], [a\bbar,a^{-2} b a^2] 
\big\rangle. \label{eqnF1}
\end{align}

In addition to this  presentation, we implemented the chain on two 
further presentations derived using 
simple Tietze transformations:
\begin{align}
F=  \big\langle a,b,c,d &\,\big|\, c=\abar b a, d=\abar c a, [a\bbar,c], 
[a\bbar, d]
\big\rangle, \label{eqnF2}\\
F=  \big\langle a,b,c,d,e  &\,\big|\, c=\abar b a, d=\abar c a, e=a\bbar, 
[e,c], [e,d] \big\rangle. \label{eqnF3}
\end{align}
Note that the generators $a,b,c,d$ above are usually denoted
$x_0,x_1,x_2,x_3$ respectively in the Thompson's group literature.

We display the mean length computed from our Markov chain for these three 
presentations in Figure~\ref{fig:thomp plots}. 
In all cases we also saw no 
indication of a singularity at the amenable values of $\beta=\nicefrac{1}{3}$, $\beta=\nicefrac{1}{7}$ 
and $\beta=\nicefrac{1}{9}$ respectively. We also repeated the error-analysis that was done 
for $\W{G}$ above --- see Figure~\ref{fig:thomp err plots}. Again we saw no 
indication of a singularity present in these statistics at the amenable value 
of 
$\beta$. We have made rough estimates of the location of the dominant 
singularity of the cogrowth series by estimating where the reciprocal of the 
observed error goes to zero.  The data from these presentations were easier to 
analyse than that from $\W{G}$ and because of this we were able to obtain 
estimates with tighter error bars. Our analysis gives
\begin{align}
 \beta_c &= 0.395\pm0.005, 0.172\pm 0.002 \mbox{ and } 0.134\pm0.004
\end{align}
for the three presentations. These imply cogrowths of approximately 
$2.53\pm0.03, 5.81\pm0.07$ and $7.4\pm0.2$, all of which are well below the 
amenable values of 3,7 and 9.

Of course, these 
estimates do not constitute a proof that Thompson's 
group is non-amenable. However, they are stronger numerical evidence than 
any previous work (such as \cite{MR2473819, MR2395786} and \cite{MR3043436}). 
As 
is the case with almost any numerical experiment, one cannot rule out the 
presence of particular pathalogical behaviours in Thompson's group that distort 
the behaviour of the chain and so the numerical data.  

\begin{figure}
  \centering
  \subfloat[Standard presentation (\ref{eqnF1}) for $F$  sampled with $\alpha = 
2$]{
    \includegraphics[height=5cm]{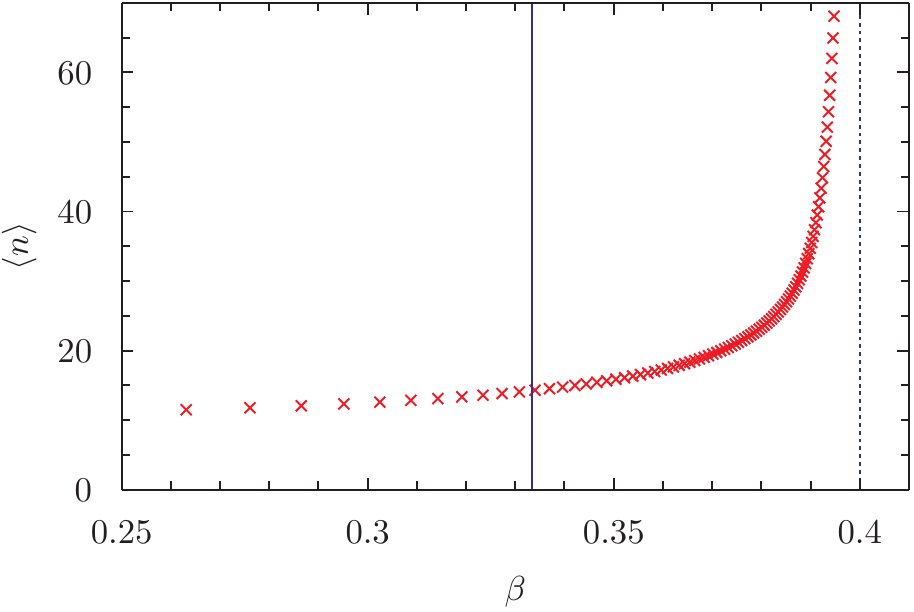}}\\
  \subfloat[Presentation (\ref{eqnF2})   for $F$  sampled with $\alpha = 2$]{
    \includegraphics[height=5cm]{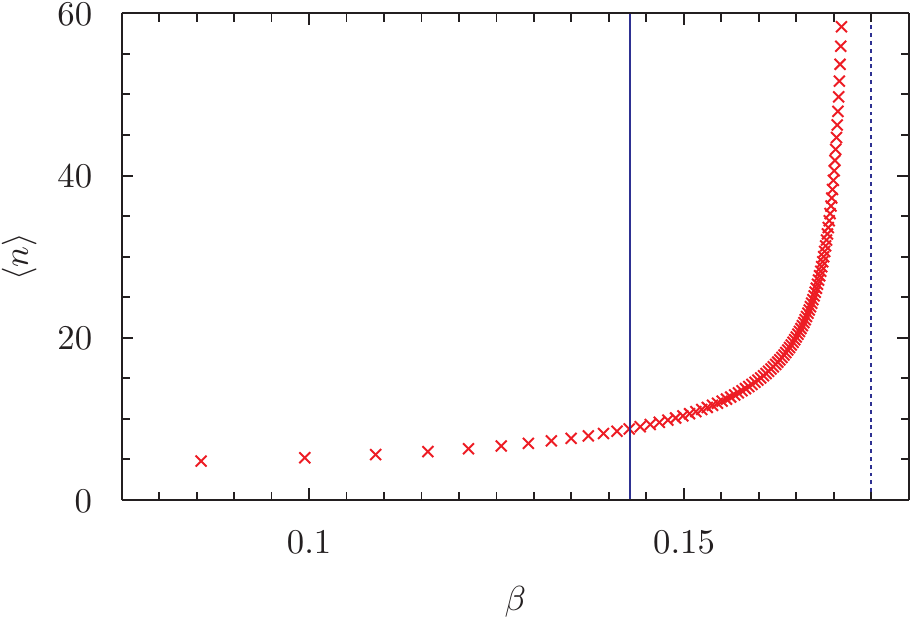}}\\
  \subfloat[Presentation (\ref{eqnF3})   for $F$ sampled with $\alpha = 1$]{
    \includegraphics[height=5cm]{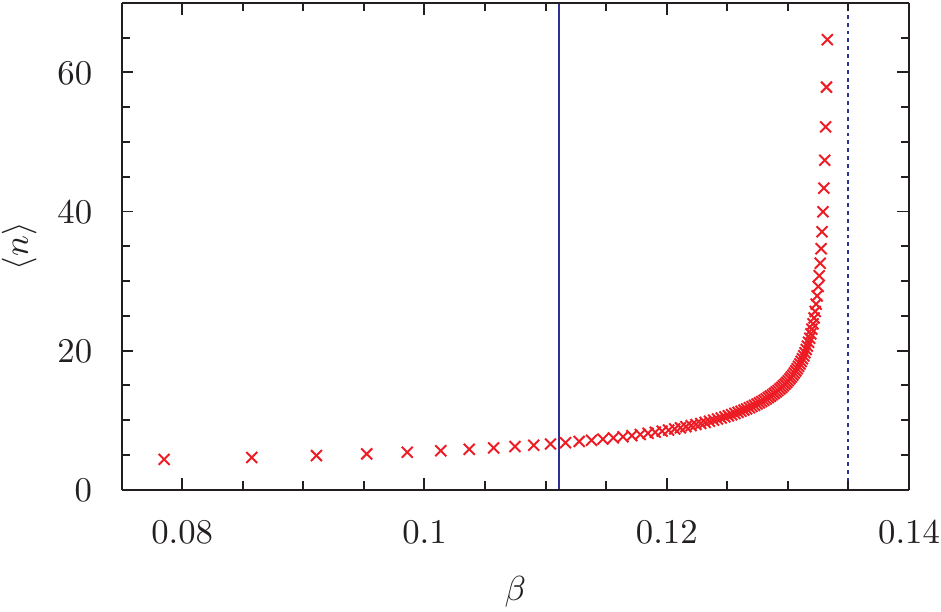}}
  \caption{Mean length of freely reduced trivial words in Thompson's group $F$
at different values of~$\beta$. The solid blue lines indicate the reciprocal of
the cogrowth of amenable groups with $k$ generators~$\beta_c =
\nicefrac{1}{(2k-1)}$. The dashed blue lines indicate the approximate
location of the vertical asymptote. In each case, we see that the mean length
of trivial words is finite for $\beta$-values  slightly above~$\beta_c$.}
  \label{fig:thomp plots}
\end{figure}

\begin{figure}
  \centering
  \subfloat[Standard presentation  (\ref{eqnF1})  for $F$  sampled with $\alpha 
= 0,1,2,3$.]{
    \includegraphics[height=5cm]{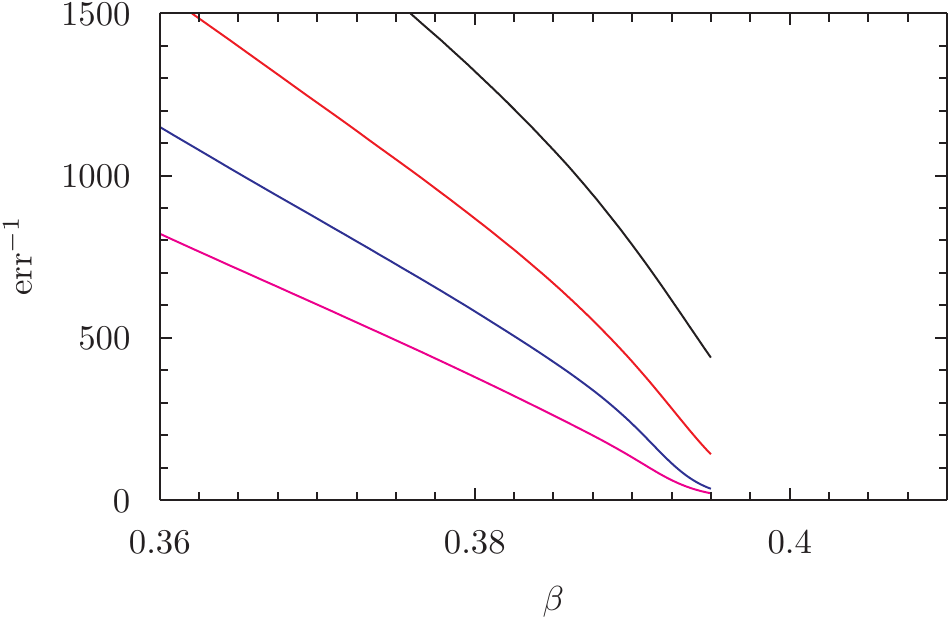}}\\
  \subfloat[Presentation  (\ref{eqnF2})    for $F$ sampled with $\alpha = 
0,1,2,3$.]{
    \includegraphics[height=5cm]{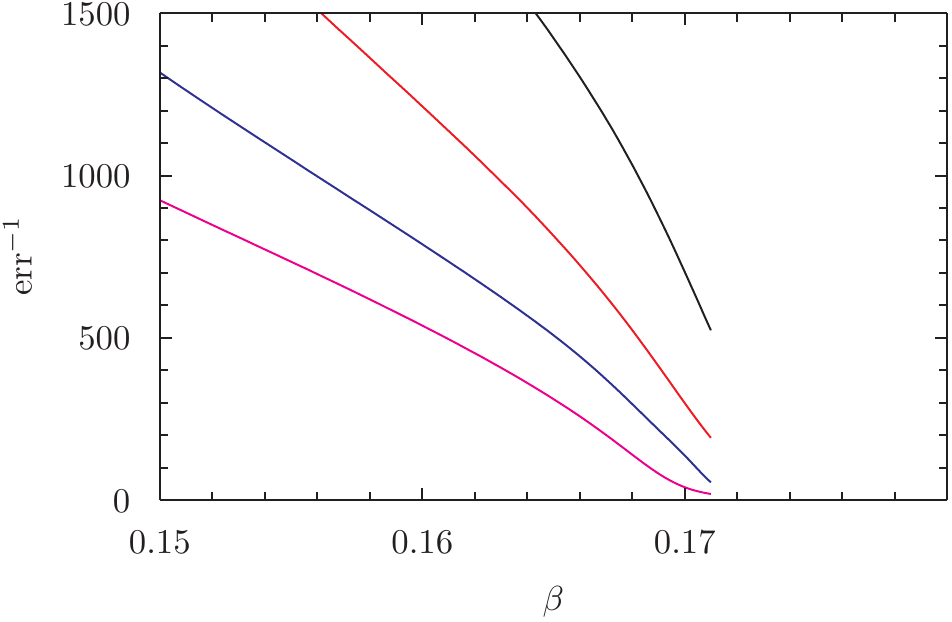}}\\
  \subfloat[Presentation  (\ref{eqnF3})   for $F$ sampled with $\alpha = 
0,1,2,3$.]{
    \includegraphics[height=5cm]{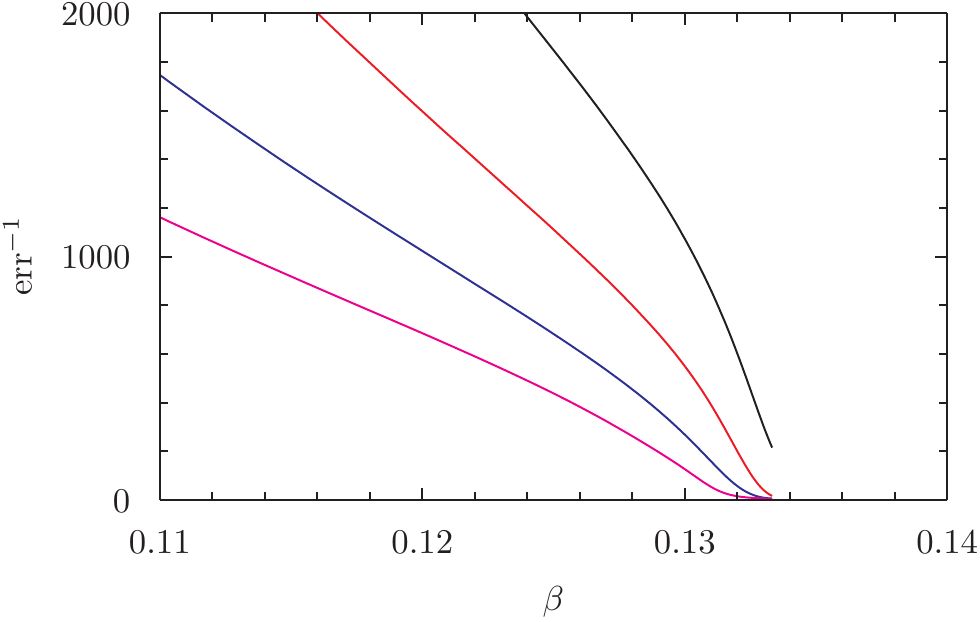}}
  \caption{The reciprocal of the estimated standard error of the mean length as
a function of $\beta$ for the three presentations of Thompson's group. In each
plot we show 4 curves corresponding to simulations at $\alpha=0,1,2,3$
(anti-clockwise from top). Extrapolating these curves leads to estimates of
$\beta_c$ of $0.395\pm0.005$, $0.172\pm 0.002$, $0.134\pm0.004$. These are all
well above the values of amenable groups.}
  \label{fig:thomp err plots}
\end{figure}

\section{Conclusions}
\label{sec conc}
We have introduced a novel Markov chain which samples trivial words from 
finitely presented groups. Since this chain operates on the state space of 
trivial words rather than on the Cayley graph, it is quite different from 
previous studies of random walks on groups. We have shown that the Markov chain 
converges to the stationary distribution $\pi$ and so asymptotically samples 
from it. Further, $\pi$ is a stretched Boltzmann distribution related to the 
cogrowth series of the presentation and so statistics collected from the chain 
inform us about the cogrowth of the group.

We have implemented the chain for presentations of both amenable and 
non-amenable groups for which the cogrowth series is known exactly. In these 
cases we observe excellent agreement between statistics collected from our 
chain and exact results. We have also implemented the chain for presentations 
of groups for which little is known about the cogrowth series. In the case of 
the basilica group (or more precisely a finitely presented group into which 
the basilica group embeds), our results are consistent with the amenability of 
the group. On the other hand, our results for Thompson's group $F$ suggest that 
it is not amenable. 

In cases where the cogrowths series is known exactly (or can be computed 
to arbitrary precision) we observed that the mean length statistic generated by our 
chains converged quickly to the correct value. This behaviour held for both 
amenable and non-amenable groups.

As is the case with any numerical experiment we cannot rule out the presence of 
pathologies influencing our results. This raises two obvious questions which lie 
beyond this present work: how can we determine the rate at which the Markov 
chain convergences to the stationary distribution; and how can we analyse 
statistics from the chain to obtain precise estimates of the asymptotic 
behaviour of the cogrowth function. Both of these questions have strong implications for 
numerical tests of the amenability of a group, and we intend to pursue them in future work.

\section*{Acknowledgements}
The authors thank Sean Cleary, Tony Guttmann and Stu Whittington for
helpful discussions about this work. Much of the numerical work was run on the 
Westgrid computer cluster and the authors thank Westgrid for their support.
This research was supported by the Australian Research Council (ARC), the the 
Natural Sciences and Engineering Research Council of Canada (NSERC), and 
Perimeter Institute for Theoretical Physics.  Research at Perimeter Institute is 
supported by the Government of Canada through Industry Canada and by the 
Province of Ontario through the Ministry of Economic Development and Innovation.

\bibliographystyle{plain}
\bibliography{refs-rand}

\end{document}